\newtheorem{thm}{Theorem}
\newtheorem*{thm*}{Theorem}
\newtheorem{cor}{Corollary}
\newtheorem{lem}{Lemma}
\newtheorem{prop}{Proposition}
\theoremstyle{definition}
\newtheorem{assume}{Assumption}
\theoremstyle{remark}
\newtheorem*{rmk*}{Remark}
\theoremstyle{plain}
\newtheorem*{FujiiThm}{Fujii's theorem}
\newtheorem*{ITSP}{The Error in PNT and Zero-Free Region (PNT-ZFR)}
\title[Average Goldbach and Zero-Free Regions]{The Average Number of Goldbach Representations and Zero-Free Regions of the Riemann Zeta-Function}
\author[Billington]{Keith Billington}
\address{Department of Mathematics and Statistics, San Jos\'{e} State University}
\email{keith.billington@sjsu.edu}
\author[Cheng]{Maddie Cheng}
\address{Department of Mathematics and Statistics, San Jos\'{e} State University}
\email{maddie.cheng@sjsu.edu}
\author[Schettler]{Jordan Schettler}
\address{Department of Mathematics and Statistics, San Jos\'{e} State University}
\email{jordan.schettler@sjsu.edu}
\author[Suriajaya]{Ade Irma Suriajaya}
\address{Faculty of Mathematics, Kyushu University}
\email{adeirmasuriajaya@math.kyushu-u.ac.jp}
\subjclass[2020]{11P32, 11N05, 11N37, 11M26}
\keywords{Goldbach representations, Prime Number Theorem, Riemann zeta-function, zero-free region}
\begin{document}

\maketitle

\begin{abstract}
In this paper, we prove an unconditional form of Fujii's formula for the average number of Goldbach representations and show that the error in this formula is determined by a general zero-free region of the Riemann zeta-function, and vice versa.
In particular, we describe the error in the unconditional formula in terms of the remainder in the Prime Number Theorem which connects the error to zero-free regions of the Riemann zeta-function.
\end{abstract}

\section{Introduction and Results}
\label{sec:intro}

We consider the function
\begin{equation*}
\psi_2(n) := \sum_{m+m'=n} \Lambda(m)\Lambda(m'),
\end{equation*}
where $\Lambda$ is the von Mangoldt function, defined by $\Lambda(n)=\log p$ if $n=p^m$, $p$ a prime and $m\ge 1$, and $\Lambda(n)= 0$ otherwise. Thus $\psi_2(n)$ counts the number of \lq \lq Goldbach representations" of $n$ as sums of both primes and prime powers when weighted by the logarithm.
In this paper, we study the average number of Goldbach representations via the summatory function
$$ G(N) := \sum_{n\le N} \psi_2(n). $$
Here and throughout this paper all sums over integers start at 1 unless otherwise indicated.
Note, however, that $G(N)=0$ if $N<4$.
Fujii \cite{Fujii1,Fujii2,Fujii3} in 1991 obtained the following estimate for $G(N)$.
\begin{FujiiThm}[Fujii]\label{thm-Fujii}
Assuming the Riemann Hypothesis, we have for all sufficiently large $N$ that
\begin{equation}\label{FujiiThm}
G(N) = \frac{N^2}{2} - 2\sum_{\rho} \frac{N^{\rho+1}}{\rho(\rho+1)} + O(N^{\frac43}(\log N)^{\frac43}),
\end{equation}
where the sum is over zeros $\rho=\beta+i\gamma$ of the Riemann zeta-function $\zeta(s)$ satisfying $\gamma\neq0$, and the Riemann Hypothesis is $\beta = 1/2$. This sum over zeros is absolutely convergent. 
\end{FujiiThm}

The error term in Fujii's formula \eqref{FujiiThm} above was improved by Bhowmik and Schlage-Puchta in \cite{BhowPuchta2010} to $O(N\log^5\!N)$.
This was further refined by Languasco and Zaccagnini \cite{Lang-Zac1} to $O(N\log^3\!N)$, also reproved by Goldston and Yang \cite{Gold-Yang}.
The oscillation of the second term in \eqref{FujiiThm} (the sum over zeros) has been studied by Mossinghoff and Trudgian \cite{MT22}, building upon the work of Fujii \cite{Fujii3}.
These are all obtained assuming the Riemann Hypothesis (RH).

Not limited to improvements for the error, this Fujii-type formula \eqref{FujiiThm} for $G(N)$ has been studied by many authors in various contexts. This includes the pioneering work of Granville \cite{Gran1,Gran2}, followed by important work of Bhowmik and Ruzsa \cite{BhowRuzsa2018}, Bhowmik, Halupczok, Matsumoto, and Suzuki \cite{Bhowmik-H-M-SGoldbach2019}, and also Goldston and the fourth author \cite{GoldSur-Fujii,GoldSur-forWirsing}.
It is known that the error in \eqref{FujiiThm} is related to estimates for primes in short intervals and the method used to obtain \hyperref[thm-Fujii]{Fujii's theorem} also gives an estimate for the error in the Prime Number Theorem (PNT). The latter was further studied by Goldston and the fourth author in their other work \cite{GoldSur-PNT-PC}.
\medskip

Classical proofs of PNT have revealed how a zero-free region of the Riemann zeta-function affects the error estimate in PNT. The details can be found in many books such as \cite[Chapter III Sections 10 to 12]{Ingham1932}, \cite[Chapter 6, Chapter 12 Section 12.1, and Chapter 13 Section 13.1]{MontgomeryVaughan2007} and \cite[Chapter III]{Titchmarsh}.
Conversely, Tur\'an \cite{Turan1950,TuranBook}, Sta\'s \cite{Sta61}, and Pintz \cite{Pintz1980,Pintz1984} showed that an error estimate for PNT implies a zero-free region for the Riemann zeta-function. 
We state these results in a complete form as follows.
We denote the remainder term in PNT by
\begin{equation}\label{PNT-remainder}
R(x):= \psi(x) - x,
\end{equation}
where $\psi(x) := \sum_{n\le x} \Lambda(n)$ counts the number of primes and prime powers up to $x$ weighted by the logarithm.
\begin{ITSP}[Ingham, Tur\'an, Sta\'s, and Pintz] \label{thm-InghamPintz}
Let $0<\varepsilon<1$ be fixed and $\eta(u)$ be a continuous decreasing function of $u\ge0$ satisfying $0<\eta(u)\le1/2$.
\begin{enumerate}[label={(\alph*)}]
\item\label{Ingham-a} Suppose that $\zeta(\sigma+it)\neq0$ in the region
$$ \sigma > 1 - \eta(|t|), $$
then we have
$$ R(x) = O(x\exp(-(1-\varepsilon)\omega(x))), $$
where \begin{equation}\label{Ingham_omega2}
\omega(x) := \min_{u\ge1}(\eta(u)\log{x}+\log{u}).
\end{equation}

\item\label{Ingham-b} Conversely, if 
$$ R(x) = O(x\exp(-(1+\varepsilon)\varpi(x))) $$
where
\begin{equation}\label{converse_omega}
\varpi(x) := \min_{u\ge0}(\eta(u)\log{x}+u),
\end{equation}
and in addition, $\eta(u)$ satisfies $\lim_{u\to\infty}\eta'(u)=0$, then for all sufficiently large $|t|$, $\zeta(\sigma+it)\neq0$ in the region
$$ \sigma > 1 - \eta(\log|t|). $$
\end{enumerate}
\end{ITSP}

In this paper, we establish the Goldbach-analogue of the above \hyperref[thm-InghamPintz]{PNT-ZFR}.
We state these results as our first two theorems.
The first theorem is the analogue of \ref{Ingham-a} of \hyperref[thm-InghamPintz]{PNT-ZFR}.
\begin{thm}\label{thm1}
If $\zeta(\sigma+it)\neq0$ in the region
$$ \sigma > 1 - \eta(|t|), $$
for a function $\eta(u)$ satisfying the conditions in \hyperref[thm-InghamPintz]{PNT-ZFR}, then there exists a constant $0<C<1$ such that for $N\ge 4$
\begin{equation}\label{G(N)-asymp}
G(N) = \frac{N^2}{2} + O\left(N^2\exp(-C\omega(N))\right).
\end{equation}
where $\omega(x)$ is as defined in \eqref{Ingham_omega2}.
\end{thm}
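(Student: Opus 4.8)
The plan is to prove the Goldbach analogue of the Ingham direction \ref{Ingham-a} by reducing everything to the pointwise estimate for $R(x)=\psi(x)-x$ supplied by part \ref{Ingham-a} of \hyperref[thm-InghamPintz]{PNT-ZFR} together with one clean integral bound. First I would record the exact identity obtained by opening up the convolution defining $\psi_2$,
$$G(N) = \sum_{m+m'\le N}\Lambda(m)\Lambda(m') = \sum_{m\le N}\Lambda(m)\,\psi(N-m).$$
Writing $\psi(x)=x+R(x)$ and using the elementary identity $\sum_{m\le N}\Lambda(m)(N-m)=\int_0^N\psi(t)\,dt=\tfrac{N^2}{2}+\int_0^N R(t)\,dt$, this yields
$$G(N) = \frac{N^2}{2} + \int_0^N R(t)\,dt + \sum_{m\le N}\Lambda(m)\,R(N-m),$$
so the theorem reduces to showing that both error terms are $O(N^2\exp(-C\omega(N)))$.

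Next I would invoke part \ref{Ingham-a} to fix a small $\varepsilon$ and obtain $R(x)=O(x\exp(-(1-\varepsilon)\omega(x)))$, and then establish the auxiliary estimate
$$\int_1^N u\exp(-(1-\varepsilon)\omega(u))\,du = O\!\left(N^2\exp(-C\omega(N))\right)$$
for a suitable $0<C<1$. The only inputs are three elementary facts read off from the defining minimum in \eqref{Ingham_omega2}: that $\omega$ is increasing, that $\omega(N^\alpha)\ge\alpha\,\omega(N)$ for $0<\alpha\le 1$ (since $\log u\ge\alpha\log u$ for $u\ge1$), and that $\omega(N)\le\eta(1)\log N\le\tfrac12\log N$ (take $u=1$). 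Splitting the integral at $u=N^\alpha$, using $\omega(u)\ge\alpha\,\omega(N)$ on $[N^\alpha,N]$ and bounding trivially on $[1,N^\alpha]$, a choice of $\alpha$ close to $1$ gives the claim. This estimate immediately controls the first error term $\int_0^N R(t)\,dt$.

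The main obstacle is the cross term $E_2:=\sum_{m\le N}\Lambda(m)R(N-m)$, and the crucial point is to bound it \emph{without} losing a factor of $\log N$: the crude bound $\Lambda(m)\le\log N$ produces $N^2\log N\exp(-C\omega(N))$, which is fatal when $\omega$ grows only as slowly as $\log\log N$ (e.g. for a thin zero-free region with $\eta(u)\asymp 1/u$, where $\omega(N)\asymp\log\log N$). To avoid this I would insert the pointwise bound $|R(N-m)|\le C_0(N-m)\exp(-(1-\varepsilon)\omega(N-m))$ (the term $m=N$ drops out since $R(0)=0$) and then apply partial summation in $m$, using Chebyshev's bound $\psi(t)=O(t)$ in place of the trivial per-term bound. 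Here $\phi(m):=(N-m)\exp(-(1-\varepsilon)\omega(N-m))$ is monotone in $m$, which follows from $1-(1-\varepsilon)\,x\,\omega'(x)>0$ since $x\,\omega'(x)=\eta(u^\ast)\le\tfrac12$ by the envelope formula for the minimum in \eqref{Ingham_omega2}. Partial summation then converts the weights $\Lambda(m)$ into the measure $dt$ and reduces $E_2$ to $O\!\big(\int_0^N u\exp(-(1-\varepsilon)\omega(u))\,du\big)$, which is again $O(N^2\exp(-C\omega(N)))$ by the auxiliary estimate.

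Combining the three contributions gives \eqref{G(N)-asymp}. I expect the integral estimate and the monotonicity of $\phi$ to be routine, so the only genuinely delicate point is the logarithm-free treatment of $E_2$ via partial summation against $\psi(t)=O(t)$; everything else is bookkeeping around the two scaling properties of $\omega$.
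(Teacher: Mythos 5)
Your proposal is correct, and its skeleton is the same as the paper's (open the convolution, isolate $\frac{N^2}{2}+R_1(N)$ plus a cross sum $\sum_{m\le N}\Lambda(m)R(N-m)$, then feed in the bound $R(x)\ll x\exp(-(1-\varepsilon)\omega(x))$ from part \ref{Ingham-a} of \hyperref[thm-InghamPintz]{PNT-ZFR}); your identity is just Lemma \ref{lem1} with $\Lambda$ in place of $\Lambda_0$. Where you diverge is in the treatment of the cross term and the conversion to the final shape $N^2\exp(-C\omega(N))$. The paper (Proposition \ref{thm1-interm}) simply pulls out $\max_{0\le u\le N}|R(u)|$ and uses $\sum_{n\le N}|\Lambda_0(n)|\le R(N)+2N\ll N$ --- this one-line bound already avoids the $\log N$ loss you rightly worry about, since it exploits $\psi(N)\ll N$ rather than $\Lambda(m)\le\log N$; the same observation ($\sum_{m\le N}\Lambda(m)=\psi(N)\ll N$ times $\max|R|$) was available to you and would have spared the partial-summation argument with the monotone weight $\phi$. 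On the other hand, your route is more complete at the step the paper leaves implicit: passing from $N\max_{0\le u\le N}|R(u)|$ to $N^2\exp(-C\omega(N))$ in Proposition \ref{cor4}\ref{cor4a} tacitly uses that $x\exp(-C\omega(x))$ is essentially increasing, and your three facts --- $\omega$ increasing, $\omega(N^\alpha)\ge\alpha\,\omega(N)$, $\omega(N)\le\eta(1)\log N\le\frac12\log N$, plus the subgradient bound $x\,\omega'(x)=\eta(u^\ast)\le\frac12$ --- supply exactly the missing justification (the paper invokes the analogous fact, that $\log x-\varpi(x)$ is increasing, only later in the proof of Theorem \ref{thm2}). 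Your integral estimate $\int_1^N u\exp(-(1-\varepsilon)\omega(u))\,du\ll N^2\exp(-C\omega(N))$ also yields a marginally finer intermediate bound than the paper's $N\max_{0\le u\le N}|R(u)|$, though both land at the same $0<C<1$. One small caveat: in part \ref{Ingham-a} the function $\eta$ is only assumed continuous and decreasing, so $\omega$ need not be differentiable and the envelope formula should be replaced by the Lipschitz estimate $\omega(y)-\omega(x)\le\frac12\log(y/x)$ for $x<y$, which follows directly from the defining minimum and gives the monotonicity of $x\exp(-(1-\varepsilon)\omega(x))$ without any smoothness assumption; with that one-line repair your argument is complete.
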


From the above theorem, we immediately obtain the following corollary, assuming an explicit form of a zero-free region of the Riemann zeta-function.
\begin{cor}\label{cor1-1}
Suppose that $\zeta(\sigma+it)\neq0$ in the region
$$ \sigma > 1 - \eta(|t|), $$
where
\begin{equation*}
\eta(u) = \frac{c}{(\log{(u+3)})^a(\log\log{(u+3)})^b}
\end{equation*}
for some $c>0$, $0\le a\le1$, and $b\in\mathbb{R}$ but $b\ge0$ if $a=0$.
Then for any $N\ge4$, there exists a constant $c'>0$ such that
\[ G(N) = \frac{N^2}{2} + O\left(N^2 \exp\left(-c'(\log{N})^{\frac1{1+a}}(\log\log N)^{-\frac{b}{1+a}}\right)\right). \]
\end{cor}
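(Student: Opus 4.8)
The plan is to deduce the corollary directly from Theorem~\ref{thm1}: under the stated zero-free region that theorem gives
\[ G(N) = \frac{N^2}{2} + O\!\left(N^2\exp(-C\omega(N))\right), \qquad \omega(N) = \min_{u\ge1}\Big(\eta(u)\log N + \log u\Big), \]
so everything reduces to estimating $\omega(N)$ for the explicit $\eta$ at hand. Since the statement is an upper bound on the error term, it suffices to prove the one-sided estimate $\omega(N)\ge c''(\log N)^{1/(1+a)}(\log\log N)^{-b/(1+a)}$ for all large $N$ and some $c''>0$; then $c'=Cc''$ works, and the finitely many $N$ in any fixed bounded range are absorbed into the implied constant, since there the exponential factor is bounded below while $G(N)-N^2/2$ is bounded.

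Write $T=\log N$ and let $M := T^{1/(1+a)}(\log T)^{-b/(1+a)}$ denote the target order. To lower bound $\omega(N)$ I would first simplify the variable: putting $w=\log(u+3)$ and using $u\ge (u+3)/4$ for $u\ge1$ (hence $\log u\ge w-\log4$), the quantity to be minimized is bounded below, up to the additive constant $\log 4$, by
\[ h(w) := \frac{cT}{w^{a}(\log w)^{b}} + w \]
over the range $w\ge\log4$. The minimizer of $h$ sits near $w\asymp M$, where one checks $\log w\asymp\log T$, so that the $(\log w)^{b}$ factor contributes precisely a power of $\log\log N$.

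I would then establish $h(w)\ge c''M$ by a short case split. If $w\ge M$, the second term already gives $h(w)\ge M$. If $\log4\le w< M$, then $w^{a}\le M^{a}$ and the first term dominates: for $b\ge0$ one has $(\log w)^{b}\le(\log M)^{b}\asymp(\log T)^{b}$, whence $h(w)\ge cT/(M^{a}(\log M)^{b})\asymp M$ by a direct computation of exponents; for $b<0$ (which by hypothesis forces $a>0$) the factor $(\log w)^{-b}$ is increasing, but the ratio $cT(\log w)^{-b}/w^{a}$ is decreasing in $w$ on the relevant range and so is minimized as $w\to M$, where it is again $\asymp M$. In every case $h(w)\gtrsim M$, giving the desired lower bound on $\omega(N)$.

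The only delicate point is the regime $b<0$: naive bounds (for instance replacing $\log\log(u+3)$ by the constant $\log\log4$) discard exactly the logarithmic factor one is trying to produce, so the argument must track the behaviour of $h$ near its minimizer $w\asymp M$ rather than at the endpoints. Verifying that $\log w\asymp\log T$ there, and that the relevant monotonicity holds once $a>0$, is what makes the exponents $1/(1+a)$ and $-b/(1+a)$ emerge correctly. Apart from this, the corollary is a routine optimization inserted into Theorem~\ref{thm1}.
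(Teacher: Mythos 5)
Your proposal is correct, and it reaches the corollary by a genuinely different route from the paper at the key step. Both arguments begin identically, by feeding the hypothesized zero-free region into Theorem \ref{thm1} and reducing everything to the size of $\omega(N)=\min_{u\ge1}(\eta(u)\log N+\log u)$ from \eqref{Ingham_omega2}. The paper then determines $\omega(N)$ asymptotically \emph{exactly}: it locates the critical point $u_0$ via the equation $u_0\eta'(u_0)=-1/\log x$ (equation \eqref{min_pt}), solves this transcendental relation asymptotically first for $\log\log u_0$ and then for $\log u_0$, and obtains $\omega(x)=(1+a)\bigl(\mathfrak{a}c(1+a)^b\log x/(\log\log x)^b\bigr)^{1/(1+a)}\bigl(1+O(\log\log\log x/\log\log x)\bigr)$ with an explicit leading constant. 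You instead observe that only a one-sided bound $\omega(N)\gg(\log N)^{1/(1+a)}(\log\log N)^{-b/(1+a)}$ is needed, and prove it by the substitution $w=\log(u+3)$ plus a two-case comparison of $h(w)=cT/(w^a(\log w)^b)+w$ against the threshold $M=T^{1/(1+a)}(\log T)^{-b/(1+a)}$; the exponent bookkeeping ($T/(M^a(\log T)^b)\asymp M$ and $\log M\asymp\log T$) checks out in all admissible $(a,b)$ ranges, including $a=0$. This is more elementary --- no differentiation, no asymptotic inversion of \eqref{min_pt} --- at the cost of losing the explicit constant $(1+a)(\mathfrak{a}c(1+a)^b)^{1/(1+a)}$ that the paper's computation delivers; since the corollary only asks for some $c'>0$, nothing is lost for the statement itself, though the paper's version quantifies how $c'$ depends on $a$, $b$, $c$. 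One small point you should make explicit in the $b<0$ case: $w^{-a}(\log w)^{|b|}$ is decreasing only where $\log w>|b|/a$, i.e.\ for $w>e^{|b|/a}$, not on all of $[\log 4,M]$; on the initial segment $[\log 4,e^{|b|/a}]$, a compact interval independent of $N$, the first term of $h$ is already $\gg T\gg M$ (using $a>0$, whence $M=o(T)$), so the case split still closes, but as written ``on the relevant range'' leaves this to the reader. With that sentence added, your argument is complete.
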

\noindent
We note that the currently best known zero-free region is the case when $a=2/3$ and $b=1/3$, due to Korobov \cite{koro,koro2} and Vinogradov \cite{vino} independently.
\medskip

To obtain a converse of Theorem \ref{thm1}, we need to bound either $R(x)$ or, correspondingly, the function
\begin{equation}\label{R1}
R_1(x) := \psi_1(x) - \frac{x^2}{2} = \int_0^x R(u)\, du,
\end{equation}
where
\begin{equation}\label{psi1}
\psi_1(x) := \int_0^x \psi(u)\,du = \sum_{n\le x}(x-n)\Lambda(n)
\end{equation}
(see \cite[Equation (13) in Chapter II]{Ingham1932}), using a bound on
\begin{equation}\label{G-error}
E(N) := G(N) - \frac{N^2}{2}.
\end{equation}
Assuming an asymptotic for $G(N)$ of the form \eqref{G(N)-asymp}, Bhowmik and Ruzsa \cite{BhowRuzsa2018} have shown how to obtain a bound on $R(x)$ in the case when $\omega(x)=c\log{x}$, which leads to an equivalence with the quasi-Riemann Hypothesis (quasi-RH): $\zeta(s)\neq0$ in ${\rm Re}(s)>1-c$ for some constant $0<c<1/2$.
We generalize the method of \cite{BhowRuzsa2018} to a continuous increasing function $\varpi(x)$ with a few additional conditions.
Further, we deduce a bound on $R_1(x)$ instead of $R(x)$ which slightly improves the result.
Then applying \ref{Ingham-b} of \hyperref[thm-InghamPintz]{PNT-ZFR} (see also \cite[Theorem 2]{Pintz1980}), we obtain a zero-free region for the Riemann zeta-function as follows.

\begin{thm}\label{thm2}
Let $\varpi(x)$ be the function defined in \eqref{converse_omega} with $\eta(u)$ satisfying all the conditions of \hyperref[thm-InghamPintz]{PNT-ZFR} including the additional condition imposed in \ref{Ingham-b}.
Let $A$ be any fixed real number greater than $1$.
Then if for $N\ge4$ and some constant $C>5A$,
\begin{equation}\label{thm2-assump}
G(N) = \frac{N^2}{2} + O\left(N^2\exp(-C\varpi(N))\right),
\end{equation}
then for all sufficiently large $|t|$, $\zeta(\sigma+it)\neq0$ in the region
$$ \sigma > 1 - \eta(\log|t|). $$
\end{thm}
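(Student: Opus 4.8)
The plan is to reduce Theorem \ref{thm2} to part \ref{Ingham-b} of \hyperref[thm-InghamPintz]{PNT-ZFR} by converting the hypothesis \eqref{thm2-assump} on $G(N)$ into a decay estimate for $R_1(x)$ from \eqref{R1}. The bridge is an elementary convolution identity obtained by inserting $\psi(y)=y+R(y)$ into $G$: since $G(N)=\sum_{n\le N}\Lambda(n)\,\psi(N-n)$ and $\psi_1(N)=N^2/2+R_1(N)$ by \eqref{psi1}--\eqref{R1}, one finds, with $E(N)$ as in \eqref{G-error},
\[
E(N) = R_1(N) + \sum_{n\le N}\Lambda(n)\,R(N-n),
\]
or, in Riemann--Stieltjes form, $E(N)=2R_1(N)+\int_0^N R(N-u)\,dR(u)$. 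Hence a bound on $E(N)$ transfers to $R_1(N)$ precisely to the extent that the ``quadratic'' convolution term $\sum_{n\le N}\Lambda(n)R(N-n)$ can be controlled; this control is the crux of the theorem.

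First I would pin down the target. To feed into \ref{Ingham-b} of \hyperref[thm-InghamPintz]{PNT-ZFR}, in the refined $\psi_1$-form of \cite[Theorem 2]{Pintz1980} that accepts an error bound for the once-integrated $\psi_1$ as input, I need $R_1(x)=O\!\left(x^2\exp(-A\varpi(x))\right)$ with $A>1$ playing the role of $1+\varepsilon$ and $\varpi$ as in \eqref{converse_omega}. Working with $R_1$ rather than $R$ is deliberate: $\psi_1$ is the smoother object, so Pintz's power-sum method applies to it directly, avoiding a lossy Tauberian passage from $R_1$ to $R$ and yielding exactly the region $\sigma>1-\eta(\log|t|)$.

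The main obstacle is the quadratic term. Because no zero-free region is yet available, I cannot invoke the explicit formula; instead I would estimate $\sum_{n\le N}\Lambda(n)R(N-n)$ directly. Splitting at $n\le N/2$ and using the slow variation of $\varpi$ (guaranteed by $\eta$ decreasing with $\eta'(u)\to0$), the off-diagonal range, where $N-n$ has order $N$, inherits the savings $\exp(-\theta\varpi(N))$ from any current bound $R(y)\ll y\exp(-\theta\varpi(y))$, contributing $O(N^2\exp(-\theta\varpi(N)))$. The genuinely difficult part is the near-diagonal range $n$ close to $N$, i.e. $\sum_{m<N/2}\Lambda(N-m)R(m)$, which counts prime powers in short intervals near $N$ and gains nothing from a pointwise bound on the small argument $R(m)$. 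I expect this to be where the proof concentrates, and I would resolve it by a self-improving (bootstrap) argument, using the hypothesis \eqref{thm2-assump} together with a preliminary bound to estimate the near-diagonal contribution, then reinserting the improved bound on $R_1$ (converted to $R$ via the monotonicity of $\psi$) and iterating until the exponent stabilizes. A cleaner alternative is to pass to Laplace transforms, where the identity becomes $\widehat{E}(s)=2\widehat{R_1}(s)+s^3\widehat{R_1}(s)^2$, so that $\widehat{R_1}(s)=\bigl(\sqrt{1+s^3\widehat{E}(s)}-1\bigr)/s^3$; the quadratic term is then encoded entirely in the square root, and a contour/Tauberian inversion transfers the decay of $E$ to $R_1$.

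In either route the nonlinearity forces the loss of a fixed multiplicative constant in the exponent, which is exactly why the statement demands $C>5A$ rather than merely $C>A$: the factor $5$ absorbs the cost of controlling the quadratic term together with the requirement $A>1$ imposed by \ref{Ingham-b}. Once $R_1(x)=O\!\left(x^2\exp(-A\varpi(x))\right)$ is in hand, applying the converse implication \ref{Ingham-b} of \hyperref[thm-InghamPintz]{PNT-ZFR} yields $\zeta(\sigma+it)\ne0$ in $\sigma>1-\eta(\log|t|)$ for all sufficiently large $|t|$, completing the proof.
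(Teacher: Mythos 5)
The central gap is your treatment of the quadratic convolution term, and your primary route (diagonal splitting plus bootstrap) does not close it: it is circular, and in fact the iteration degrades rather than stabilizes. To begin the bootstrap you need a ``current bound'' $R(y)\ll y\exp(-\theta\varpi(y))$, but unconditionally the only available input is of Korobov--Vinogradov strength, $R(y)\ll y\exp\bigl(-c(\log y)^{3/5}(\log\log y)^{-1/5}\bigr)$, which can be far weaker than the target regime (the hypothesis \eqref{thm2-assump} allows $\varpi(N)$ as large as $\tfrac12\log N$). Each pass of your loop bounds $\sum_{n\le N}\Lambda(n)R(N-n)$ by $N^2$ times the current pointwise bound on $R$ evaluated near $N^{1/A}$, deduces a bound on $R_1$, and converts back to $R$ via the monotonicity argument \eqref{RfromR1}; but that conversion costs a square root, i.e.\ \emph{halves} the exponent, while the argument point compounds as $N^{1/A^k}$. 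The resulting recursion, schematically $g_{k+1}(x)=\tfrac12\min\{C\varpi(x/2),\,g_k((x/2)^{1/A})\}$, is monotonically decreasing, so ``iterating until the exponent stabilizes'' stabilizes only at zero: a pointwise bound on $R$ fed into the convolution can never produce an $R_1$ bound better than what you started with, which is precisely the cancellation obstruction the paper flags after Lemma \ref{lem1}. Your secondary suggestion --- desquaring on the transform side --- is the correct idea and is essentially the paper's actual argument (following Bhowmik--Ruzsa \cite{BhowRuzsa2018}), with the power series $\Psi(z)=\sum_n\Lambda(n)z^n$ on $|z|=e^{-1/N}$ in place of a Laplace transform: the hypothesis gives $\Psi(z)^2=(1-z)^{-2}+O\bigl(|1-z|N^3\exp(-f(N^{1/A}))\bigr)$ (Lemma \ref{lem2-thm2}), and taking the square root of $1+\mathcal{E}(z)$ recovers $\Psi(z)$ itself with no loss in the exponent (Lemma \ref{lem3-thm2}).

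However, your phrase ``a contour/Tauberian inversion transfers the decay of $E$ to $R_1$'' conceals exactly the step where the theorem's constants are decided. The asymptotic for $\Psi(z)$ is usable only near $z=1$, so the inversion $\frac{1}{2\pi i}\int_{|z|=r}(\Psi(z)-I(z))^2K_N(z)\,\frac{dz}{z}$ (Lemma \ref{lem4-psi2-0}) must be split at $|\alpha|=\delta$; the far arc is handled not by any bound on $R$ but by Parseval together with the unconditional estimate $\sum_n\Lambda(n)^2e^{-2n/N}\ll N\log N$ (mere PNT, no zero-free input), and balancing $\delta^4N^6\exp(-2f(N^{1/A}))$ against $N\log N/\delta$ yields the exponent $\tfrac25 f(N^{1/A})$ for $R_1$, halved to $\tfrac15$ by the passage \eqref{RfromR1} from $R_1$ to $R$. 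That factor $5$, together with $\varpi((x/2)^{1/A})\ge\tfrac1A\varpi(x/2)$ and the truncation at $N^{1/A}$ (needed because $f$ is only assumed increasing), is the concrete provenance of the condition $C>5A$ --- your proposal guesses at this bookkeeping but does not derive it. Finally, note that \ref{Ingham-b} of \hyperref[thm-InghamPintz]{PNT-ZFR} as stated takes a bound on $R$, not on $R_1$; the paper accordingly converts via \eqref{RfromR1} before invoking it, so your plan to feed $R_1$ into a ``$\psi_1$-form'' of Pintz's theorem would require you to establish that variant separately, and as it stands the square-root loss you hoped to avoid is in fact built into the final constant.
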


As we will see in the next section, a zero-free region $\sigma > 1 - \eta(|t|)$ for the Riemann zeta-function satisfies either $\eta(|t|) \to 0$ as $|t|\to \infty$, or $\eta(u)$ being a constant function which refers to either the quasi-RH or RH case.
The result in Theorem \ref{thm2} is new and essentially optimal in the case when $\eta(|t|) \to 0$ as $|t|\to \infty$.
Our method, which is based on \cite{BhowRuzsa2018}, is inefficient in the quasi-RH and RH case. In these cases, however, an essentially best-possible result has been obtained by a different method in \cite[Theorem 1 (2)]{Bhowmik-H-M-SGoldbach2019}.
More precisely, they showed that if for $N\ge4$ and some constant $0<c\le1/2$, if
\[ G(N) = \frac{N^2}{2} + O\left(N^{2-c+\epsilon}\right) \]
holds for any $\epsilon>0$, then $\zeta(s)\neq0$ in the region ${\rm Re}(s)>1-c$.
\medskip

The method of Bhowmik and Ruzsa \cite{BhowRuzsa2018} uses a smooth power series average of $G(N)$, and most of the proof is concerned with removing the smooth weight in the final result. If instead we are content to use a power series generating function for our averaging, many of the difficulties in our proofs disappear, and we obtain the following result.

\begin{thm}\label{thm-smooth}
Let
\begin{equation*}
F(N) := \sum_n \psi_2(n)e^{-n/N},
\end{equation*}
and suppose that 
$\zeta(\sigma+it)\neq0$ in the region
$$ \sigma > 1 - \eta(|t|), $$
for some continuous decreasing function $\eta(u)$ such that $0<\eta(u)\le1/2$.
Then for $N\ge3$, we have
\[ F(N) = N^2 + O\left(N^{2-\eta(\log{N})}\right). \]
\end{thm}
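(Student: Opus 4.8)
The key simplification from using the exponential weight is that $F(N)$ factors completely. Since $e^{-n/N}=e^{-m/N}e^{-m'/N}$ whenever $m+m'=n$, I would interchange the order of summation—valid by absolute convergence of $\sum_m\Lambda(m)e^{-m/N}$—to get
\[
F(N)=\sum_{m,m'\ge1}\Lambda(m)\Lambda(m')e^{-m/N}e^{-m'/N}=\Psi(N)^2,\qquad \Psi(N):=\sum_n\Lambda(n)e^{-n/N}.
\]
The additive convolution thus disappears, and it suffices to prove the smoothed prime number estimate $\Psi(N)=N+O(N^{1-\eta(\log N)})$; squaring then gives $F(N)=N^2+O(N^{2-\eta(\log N)})$ since the cross term dominates the square of the error.

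Next I would produce an exact explicit formula for $\Psi(N)$. Using the Mellin pair $e^{-x}=\frac1{2\pi i}\int_{(c)}\Gamma(s)x^{-s}\,ds$ with $x=n/N$ and summing term by term gives, for $c>1$,
\[
\Psi(N)=\frac1{2\pi i}\int_{(c)}\Bigl(-\frac{\zeta'}{\zeta}(s)\Bigr)\Gamma(s)N^s\,ds.
\]
I would then shift the contour to $\mathrm{Re}(s)=-\tfrac12$, a line free of zeros and of poles of $\Gamma$. The decisive feature of the exponential weight is that its transform $\Gamma(s)$ decays like $e^{-\pi|t|/2}$ on vertical lines, so the horizontal connecting segments are negligible and the shifted integral is $O(N^{-1/2})$. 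Collecting residues—the simple pole of $-\zeta'/\zeta$ at $s=1$ (residue $\Gamma(1)N=N$), the pole of $\Gamma$ at $s=0$ (the constant $-\zeta'/\zeta(0)$), and the nontrivial zeros $\rho=\beta+i\gamma$ of $\zeta$ (all of which have $\beta>-\tfrac12$, hence are crossed)—yields
\[
\Psi(N)=N-\sum_\rho\Gamma(\rho)N^\rho+O(1).
\]
The entire arithmetic content now sits in the sum over zeros, and this is where the hypothesized zero-free region enters.

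The main step—and the one requiring the most care—is the estimate $\sum_\rho|\Gamma(\rho)|N^\beta\ll N^{1-\eta(\log N)}$. Here the exponential decay $|\Gamma(\beta+i\gamma)|\ll(|\gamma|+1)^{1/2}e^{-\pi|\gamma|/2}$ meets the geometry of the zero-free region: every zero satisfies $\beta\le1-\eta(|\gamma|)$, so $N^\beta\le N^{1-\eta(|\gamma|)}$. I would then split the sum at $|\gamma|=\log N$ and use the monotonicity of $\eta$. For $|\gamma|\le\log N$ we have $\eta(|\gamma|)\ge\eta(\log N)$, hence $N^{1-\eta(|\gamma|)}\le N^{1-\eta(\log N)}$, and the remaining factor sums against the $O(\log(|\gamma|+2))$ density of zeros to a convergent constant; for $|\gamma|>\log N$ we bound $N^{1-\eta(|\gamma|)}\le N$ while $e^{-\pi|\gamma|/2}\le N^{-\pi/2}$, which is more than enough since $\pi/2>\tfrac12\ge\eta(\log N)$. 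Both ranges contribute $O(N^{1-\eta(\log N)})$, and this is exactly where no polylogarithmic loss creeps in—the clean factor $N^{1-\eta(\log N)}$ emerges precisely because $\eta$ is nonincreasing. The chief points to watch are the rigorous justification of the contour shift (controlling $\zeta'/\zeta$ on the intermediate vertical lines and on the horizontal segments, which the rapid decay of $\Gamma$ makes routine) and the verification that the split genuinely avoids extra logarithms; granting these, $\Psi(N)=N+O(N^{1-\eta(\log N)})$ follows and squaring completes the proof. As a consistency check, under RH ($\eta\equiv\tfrac12$) this gives $\Psi(N)=N+O(N^{1/2})$ and $F(N)=N^2+O(N^{3/2})$, matching the exact bound $\bigl|\sum_\rho\Gamma(\rho)N^\rho\bigr|\le N^{1/2}\sum_\gamma|\Gamma(\tfrac12+i\gamma)|$.
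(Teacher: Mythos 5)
Your proposal is correct and is essentially the paper's proof: both factor $F(N)=\Psi(N)^2$, invoke the explicit formula $\Psi(N)=N-\sum_\rho\Gamma(\rho)N^\rho+O(1)$, and bound the zero sum by $N^{1-\eta(\log N)}$ using the exponential decay of $\Gamma$ on vertical lines, the $O(\log T)$ zero count per unit interval, and a split of the sum at height $\log N$ together with the monotonicity of $\eta$. The only cosmetic differences are that you derive the explicit formula by a Mellin contour shift where the paper cites it from Montgomery--Vaughan, and you fix the splitting height at $\log N$ outright where the paper splits at a general $T$ and then optimizes to $T=\log N$.
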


As a corollary to the above theorem, following Corollary \ref{cor1-1}, we also give an example using a well-known shape of a zero-free region of the Riemann zeta-function.
\begin{cor}\label{cor-smooth1}
Suppose that $\zeta(\sigma+it)\neq0$ in the region
$$ \sigma > 1 - \eta(|t|) $$
for $|t|\ge3$, where
\begin{equation*}
\eta(u) = \frac{c}{(\log{u})^a(\log\log{u})^b}
\end{equation*}
for some $c>0$, $0\le a\le1$, and $b\in\mathbb{R}$ but $b\ge0$ if $a=0$.
Then for any $N\ge16$,
\[ F(N) = N^2 + O\left(N^{2}\exp\left(-\frac{c\log N}{{\left( \log{\log N} \right)^{a}}{\left( \log{\log{\log N}} \right)^{b}}}\right)\right). \]
\end{cor}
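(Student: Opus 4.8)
The plan is to obtain Corollary \ref{cor-smooth1} as a direct specialization of Theorem \ref{thm-smooth}: once the explicit $\eta$ is substituted into the error term $O(N^{2-\eta(\log N)})$, the entire content is an algebraic simplification. First I would record the identity
\[
N^{2-\eta(\log N)}=N^2\,N^{-\eta(\log N)}=N^2\exp\bigl(-\eta(\log N)\log N\bigr),
\]
valid for any $\eta$, and then evaluate at $u=\log N$. For $\eta(u)=c(\log u)^{-a}(\log\log u)^{-b}$ this gives
\[
\eta(\log N)=\frac{c}{(\log\log N)^a(\log\log\log N)^b},
\qquad
\eta(\log N)\log N=\frac{c\log N}{(\log\log N)^a(\log\log\log N)^b},
\]
which is precisely the exponent in the claimed bound. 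This computation also explains the hypothesis $N\ge16$: it is exactly the condition $\log\log\log N>0$ (equivalently $N>e^{e}$) needed for the right-hand side to be defined.

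The substantive step is to check that this $\eta$ satisfies the hypotheses of Theorem \ref{thm-smooth}, which requires $\eta$ to be continuous, decreasing, and to obey $0<\eta\le1/2$. For $u\ge3$ we have $\log u>1$, so $\log\log u>0$ and $\eta$ is continuous and positive. For monotonicity I would compute
\[
\frac{d}{du}\log\eta(u)=-\frac{1}{u\log u}\Bigl(a+\frac{b}{\log\log u}\Bigr),
\]
which is negative once $a+b/\log\log u>0$; when $a>0$ this holds for all large $u$, and when $a=0$ the standing assumption $b\ge0$ makes $\eta$ non-increasing (constant if also $b=0$, in which case one just needs $c\le1/2$). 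Since $\eta(u)\to0$ as $u\to\infty$ outside the constant case, the bound $\eta\le1/2$ likewise holds for all large $u$.

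To discharge the remaining ``for all large $u$'' caveats uniformly I would pass to the running minimum $\tilde\eta(u):=\min\bigl(\tfrac12,\inf_{3\le v\le u}\eta(v)\bigr)$, extended by the constant $\tilde\eta(3)$ for $u<3$. By construction $\tilde\eta$ is continuous, non-increasing, satisfies $0<\tilde\eta\le1/2$, and $\tilde\eta\le\eta$ on $[3,\infty)$; since $\eta$ is eventually decreasing to $0$, one has $\tilde\eta(u)=\eta(u)$ for all sufficiently large $u$. Because $\tilde\eta\le\eta$, the assumed region $\sigma>1-\eta(|t|)$ contains $\sigma>1-\tilde\eta(|t|)$, so the zero-free hypothesis of Theorem \ref{thm-smooth} holds with $\tilde\eta$ in place of $\eta$; the corollary's restriction to $|t|\ge3$ is harmless because $\zeta$ has no zeros with $|t|<3$ (the lowest zero lies near $t\approx14.13$, and $\zeta$ is nonvanishing on the real segment $0<\sigma<1$). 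Applying Theorem \ref{thm-smooth} to $\tilde\eta$ and using $\tilde\eta(\log N)=\eta(\log N)$ for large $N$ produces the stated estimate for all large $N$, while the finitely many $N$ in the bounded range $16\le N\le N_0$ are absorbed into the implied constant since $F(N)$ and $N^2$ are both bounded there. The only obstacle is thus this bookkeeping of the hypotheses on the bounded initial range, not any analytic difficulty; the heart of the corollary is the one-line exponent computation above.
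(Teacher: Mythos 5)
Your proposal is correct and takes essentially the same route as the paper: the paper's proof is exactly the one-line specialization of Theorem \ref{thm-smooth}, substituting $u=\log N$ to get $N^{1-\eta(\log N)}=N\exp\bigl(-c\log N/((\log\log N)^a(\log\log\log N)^b)\bigr)$. Your extra verification of the theorem's hypotheses (monotonicity, the bound $\eta\le1/2$, and the restriction $|t|\ge3$, handled via the running minimum $\tilde\eta$) is sound bookkeeping that the paper leaves implicit.
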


Similarly, we can also show a converse for Theorem \ref{thm-smooth}. We note, however, that this does not require the arguments in Section \ref{sec-converse} where we prove Theorem \ref{thm2}. Instead we make use of the explicit formula (see \eqref{PsiSmoothSumeq} in Section \ref{sec:smooth}) and apply the method of \cite{Pintz1980,Pintz1984}.

\section{Bounding the error term \texorpdfstring{$E(N)$}{E(N)} of \texorpdfstring{$G(N)$}{G(N)} --- Proof of Theorem \ref{thm1} and Corollary \ref{cor1-1}}
\label{sec:Goldbach-error}

The explicit formula
\begin{equation}\label{explicit-psi_1}
\psi_1(N) = \frac{N^2}2 - \sum_{\rho} \frac{N^{\rho+1}}{\rho(\rho+1)} + O(N),
\end{equation}
(see \cite[Chapter IV Theorem 28]{Ingham1932}) where the sum is as in \hyperref[thm-Fujii]{Fujii's theorem}, implies
\begin{equation}\label{explicit-R1}
R_1(N) = - \sum_{\rho} \frac{N^{\rho+1}}{\rho(\rho+1)} + O(N).
\end{equation}
Thus with this notation, Fujii's formula \eqref{FujiiThm} can be rewritten as
\[ G(N) = \frac{N^2}{2} + 2R_1(N) + O(N^{\frac43}(\log N)^{\frac43}), \]
and the currently best estimate under RH, due to Languasco and Zaccagnini \cite{Lang-Zac1}, is
\begin{equation}\label{LZ-RH_best}
G(N) = \frac{N^2}{2} + 2R_1(N) + O(N\log^3\!N).
\end{equation}
As a side note, assuming RH, we immediately obtain from \eqref{explicit-R1} that
\begin{equation}\label{R1_underRH}
R_1(N) \ll N^{3/2},
\end{equation}
which then implies
\begin{equation}\label{E_underRH}
G(N) = \frac{N^2}{2} + O(N^{3/2}).
\end{equation}
We also remark that Bhowmik and Schlage-Puchta in \cite{BhowPuchta2010} also showed unconditionally that 
\begin{equation*}
G(N) = \frac{N^2}{2} + 2R_1(N) + \Omega(N\log\log{N}),
\end{equation*}
which suggests that the estimate \eqref{LZ-RH_best} is close to being sharp.

By \eqref{explicit-R1}, using any zero of the Riemann zeta-function on the line ${\rm Re}(s)=1/2$, such as $\rho=14.134725\ldots$, we immediately have
\begin{equation}\label{R1-Omega}
R_1(N) = \Omega_\pm(N^{3/2}).
\end{equation}
This can be done using classical methods given in, for example, \cite[Theorem 33]{Ingham1932} or \cite[Theorem 15.3]{MontgomeryVaughan2007}.
Denoting by $E(N)$ the error for $G(N)$, as defined in \eqref{G-error}, the estimate \eqref{R1-Omega} and the discussion in the previous paragraph suggest that $R_1(N)$ is likely the main term of $E(N)$.
It is, however, difficult to draw such a conclusion, unless we can detect the interaction between the oscillation of the term
$$ G(N) - \frac{N^2}{2} - 2R_1(N) $$
and the oscillation of $R_1(N)$ itself. The following lemma clarifies this statement.

\begin{lem}\label{lem1}
Define $\Lambda_0(n) = \Lambda(n)-1$ for $n\ge 1$ and $\Lambda_0(n)=0$ otherwise. Then for $N\ge4$, we have 
\begin{equation}\label{lem1:2}
G(N) = \frac{N^2}{2} + 2R_1(N) + \sum_{n\le N}\Lambda_0(n)R(N-n) + O(N).
\end{equation}
\end{lem}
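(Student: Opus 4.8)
The plan is to unfold the definition of $G(N)$ into a double sum and then separate each copy of $\Lambda$ into its mean value $1$ and its fluctuation $\Lambda_0 = \Lambda - 1$. Since $\psi_2(n)$ sums $\Lambda(m)\Lambda(m')$ over pairs with $m+m'=n$ and $m,m'\ge1$, summing over $n\le N$ simply removes the diagonal constraint, so
\[
G(N) = \sum_{\substack{m,m'\ge1\\ m+m'\le N}} \Lambda(m)\Lambda(m').
\]
Writing $\Lambda(m)=\Lambda_0(m)+1$ and $\Lambda(m')=\Lambda_0(m')+1$ and multiplying out produces four sums: a ``constant'' sum of $1$, two mixed sums each carrying a single $\Lambda_0$, and the ``fluctuation'' sum of $\Lambda_0(m)\Lambda_0(m')$. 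I would then evaluate these four pieces separately and match them to the three terms on the right of \eqref{lem1:2}.

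The constant sum is a lattice-point count: $\sum_{m+m'\le N,\ m,m'\ge1}1 = \sum_{s=2}^{N}(s-1) = \tfrac{N^2}{2}+O(N)$, which supplies the leading term. For each mixed sum I would perform the inner summation over the free variable, which runs over $\{1,\dots,N-m\}$, an interval of integer length because $N$ is an integer; this collapses the mixed sum to $\sum_{n\le N}(N-n)\Lambda_0(n)$. Subtracting off $\sum_{n\le N}(N-n)=\tfrac{N^2-N}{2}$ and recognizing $\sum_{n\le N}(N-n)\Lambda(n)=\psi_1(N)=\tfrac{N^2}{2}+R_1(N)$ from \eqref{R1} and \eqref{psi1}, each mixed sum equals $R_1(N)+O(N)$; by the symmetry in $m\leftrightarrow m'$ the two together contribute $2R_1(N)+O(N)$.

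The decisive piece is the fluctuation sum. Carrying out the inner summation gives
\[
\sum_{\substack{m,m'\ge1\\ m+m'\le N}} \Lambda_0(m)\Lambda_0(m') = \sum_{n\le N}\Lambda_0(n)\sum_{k\le N-n}\Lambda_0(k).
\]
Here the inner sum is $\psi(N-n)-\lfloor N-n\rfloor = R(N-n)+\{N-n\}$ by \eqref{PNT-remainder}, and since $n$ and $N$ are integers the fractional part $\{N-n\}$ vanishes identically. Hence this piece equals the convolution $\sum_{n\le N}\Lambda_0(n)R(N-n)$ \emph{exactly}, with no error term. Adding the three contributions yields \eqref{lem1:2}.

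I do not expect a genuine obstacle here, since the identity is essentially careful bookkeeping; the one point that must be handled precisely, and which makes the lemma come out cleanly, is the vanishing of $\{N-n\}$ at integer arguments, as this is exactly what lets the fluctuation term appear as the clean convolution $\sum_{n\le N}\Lambda_0(n)R(N-n)$ rather than with an extra $\sum_{n\le N}\Lambda_0(n)\{N-n\}$ contribution. The only remaining care is tracking the $O(N)$ discrepancies from the lattice-point count and from $\sum_{n\le N}(N-n)=\tfrac{N^2}{2}+O(N)$, both of which are harmless at the stated level of precision.
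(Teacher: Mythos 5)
Your proposal is correct and takes essentially the same approach as the paper: the paper's proof likewise expands $\Lambda(m)\Lambda(m')=(\Lambda_0(m)+1)(\Lambda_0(m')+1)$ (packaged as the function $\psi^0_2(n)$, whose expansion gives $\psi_2(n)-2\psi(n-1)+(n-1)$), identifies the cross terms with $\psi_1(N)$ and hence $2R_1(N)+O(N)$, and evaluates the remaining $\Lambda_0$-convolution as $\sum_{n\le N}\Lambda_0(n)R(N-n)$. The only cosmetic difference is order of operations --- the paper expands per $n$ and then sums over $n\le N$, while you expand the double sum over $m+m'\le N$ in one pass --- and the integrality point you stress (that $\{N-n\}=0$, so $\psi(N-m)-\lfloor N-m\rfloor = R(N-m)$ exactly) is exactly what the paper uses implicitly when writing the inner sum as $\psi(N-m)-(N-m)$.
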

\begin{proof}[Proof of Lemma \ref{lem1}] Letting 
\[ \psi^0_2(n) := \sum_{m+m'=n}\Lambda_0(m)\Lambda_0(m'), \]
we have 
\begin{equation}\label{psi0_2-eq}
\begin{split} \psi^0_2(n) &= \sum_{m+m'=n}\big( \Lambda(m)\Lambda(m') -\Lambda(m) -\Lambda(m') +1\big) \\& = \psi_2(n) - 2\sum_{m+m'=n}\Lambda(m) + \sum_{m+m'=n}1 \\&
= \psi_2(n) - 2\psi(n-1) + (n-1).
\end{split}
\end{equation}
By \eqref{psi1} we have $\psi_1(N)=\sum_{n\le N}\psi(n-1)$, hence
\[ \begin{split} G(N) &= \sum_{n\le N} \psi_2(n) = \sum_{n\le N}\big( 2\psi(n-1) - (n-1) \big) + \sum_{n\le N} \psi^0_2(n) \\&
= 2\psi_1(N) -\frac{(N-1)N}{2} + \sum_{n\le N} \psi^0_2(n).
\end{split} \]
Next, equations \eqref{explicit-psi_1} and \eqref{explicit-R1} immediately give
\begin{equation}\label{lem1'}
G(N) = \frac{N^2}{2} + 2R_1(N) + \sum_{n\le N} \psi^0_2(n) + O(N).
\end{equation}
Finally, by \eqref{PNT-remainder},
\begin{equation*}
\begin{aligned}
\sum_{n\le N} \psi^0_2(n)
= \sum_{m\le N} \Lambda_0(m)\left(\sum_{m'\le N-m} \Lambda_0(m')\right)
&= \sum_{m\le N} \Lambda_0(m)(\psi(N-m)-(N-m)) \\
&= \sum_{m\le N} \Lambda_0(m)R(N-m),
\end{aligned}
\end{equation*}
which gives \eqref{lem1:2}.
\end{proof}

The explicit formula for $\psi(x)$ as stated in \cite[Equation (1.7)]{Gold83}, originally due to \cite{Landau1908}, gives for $x, T\ge 3$, 
\begin{equation}\label{explicit-psi}
R(x) = - \sum_{\substack{\rho \\ |\gamma|\le T}} \frac{x^\rho}{\rho} + O\left(\frac{x\log x\log\log x}{T}\right) + O(\log x),
\end{equation}
where the sum is over zeros $\rho=\beta +i\gamma$ of the Riemann zeta-function satisfying $0<|\gamma|\le T$.
This shows that $R(x)$ is an oscillating function of $x$ and is determined by the distribution of zeros of the Riemann zeta-function, as is $R_1(x)$ which is clear from the explicit formula \eqref{explicit-R1}.
The equation \eqref{lem1:2} in Lemma \ref{lem1} indicates that by separating the error into an $R_1(N)$ term and an average over $R(x)$, we cannot rule out the possibility of the two oscillating terms canceling each other. This also results in difficulty connecting the bound on $E(N)$ directly to zero-free regions of the Riemann zeta-function.

We describe a zero-free region of the Riemann zeta-function as the region
\begin{equation*}
\sigma > 1 - \eta(|t|)
\end{equation*}
such that $\zeta(\sigma+it)\neq0$.
Here $\eta(u)$ is a continuous decreasing function satisfying $0<\eta(u)\le1/2$.
If we set
\[ \Theta := \sup\{\beta : \rho=\beta+i\gamma,\, \zeta(\rho)=0\}, \]
we have that $1/2\le\Theta\le1$. If $1/2\le\Theta<1$, then we can write $\eta(u)$ as a constant function $\eta(u)=\Theta$.
The case $1/2<\Theta<1$ is the so-called quasi-RH, and RH is when $\Theta=1/2$.
If $\Theta=1$, then $\eta(|t|)\to0$ as $|t|\to \infty$.
We further remark that the classical zero-free region of the Riemann zeta-function is the case when
\begin{equation}\label{classicZFR}
\eta(u) = \frac{c}{\log(u+3)}
\end{equation}
for some constant $c>0$, and the current best zero-free region known is the case
\begin{equation}\label{KorVinoZFR}
\eta(u) = \frac{c}{(\log{(u+3)})^{2/3}(\log\log{(u+3)})^{1/3}}, 
\end{equation}
for some constant $c>0$, which is due to Korobov \cite{koro,koro2} and Vinogradov \cite{vino} independently, see also \cite[p. 176]{Mont94}.

As seen in \hyperref[thm-InghamPintz]{PNT-ZFR}, by the explicit formula \eqref{explicit-psi} and the shape of the zero-free region of the Riemann zeta-function, $R(x)$ is bounded above by a nonnegative function of the form
\begin{equation}\label{Rbound}
x\exp(-C\omega(x))
\end{equation}
for some constant $C>0$, where $\omega(x)$ is as defined in \eqref{Ingham_omega2}.
Correspondingly, we see from the explicit formula \eqref{explicit-R1} that $R_1(x)$ is bounded by a function of the form $x^2\exp(-C\omega(x))$.
This then determines the bound for $E(N)$.
In the following proposition, we utilize Lemma \ref{lem1} to obtain an upper bound of $E(N)$ using a bound on either $R(x)$ or $R_1(x)$.
\begin{prop}\label{thm1-interm}
The average number of Goldbach representations is
\begin{equation}\label{thm1-1}
G(N) = \frac{N^2}{2} + O\left(N\max_{0\le u\le N}\left|R(u)\right| \right).
\end{equation}
Alternatively, this can be expressed in terms of $R_1(x)$ as
\begin{equation}\label{thm1-2}
G(N) = \frac{N^2}{2} + O\left(|R_1(N)| + N\sqrt{\max_{0\le u\le \frac{3N}2} |R_1(u)|} \right).
\end{equation}
\end{prop}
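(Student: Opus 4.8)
The plan is to start from the identity in Lemma \ref{lem1},
\[ G(N) = \frac{N^2}{2} + 2R_1(N) + \sum_{n\le N}\Lambda_0(n)R(N-n) + O(N), \]
and to estimate the two genuine error terms $2R_1(N)$ and $\sum_{n\le N}\Lambda_0(n)R(N-n)$ in two different ways, once in terms of $R$ and once in terms of $R_1$. A preliminary step common to both bounds is the Chebyshev-type estimate
\[ \sum_{n\le N}|\Lambda_0(n)| \le \#\{n\le N:\Lambda(n)=0\} + \sum_{p^k\le N}(\log p + 1) \ll N, \]
since $\psi(N)\ll N$ and the number of prime powers up to $N$ is $O(N/\log N)$. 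I would also record that both $\max_{0\le u\le N}|R(u)|$ and $\max_{0\le u\le 3N/2}|R_1(u)|$ are bounded below by an absolute positive constant, because $R(u)=-u$ and $R_1(u)=-u^2/2$ for $u<2$; this lets the $O(N)$ term in Lemma \ref{lem1} be absorbed into either error once that error is seen to be of size at least a constant times $N$.

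For \eqref{thm1-1}, both error terms are controlled directly by $\max_{0\le u\le N}|R(u)|$. Writing $R_1(N)=\int_0^N R(u)\,du$ gives $|R_1(N)|\le N\max_{0\le u\le N}|R(u)|$, while the triangle inequality together with the Chebyshev bound gives $\bigl|\sum_{n\le N}\Lambda_0(n)R(N-n)\bigr|\le \max_{0\le u\le N}|R(u)|\sum_{n\le N}|\Lambda_0(n)|\ll N\max_{0\le u\le N}|R(u)|$. Combining these with the absorption of the $O(N)$ term yields \eqref{thm1-1} at once.

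The substance of the proposition is \eqref{thm1-2}, where the pointwise values $R(N-n)$ must be converted into a bound involving only $R_1$. The key tool is the classical device exploiting the monotonicity of $\psi$: for $h>0$ and $x\ge0$, integrating $\psi(u)\ge\psi(x)$ on $[x,x+h]$ and $\psi(u)\le\psi(x)$ on $[x-h,x]$ gives the sandwich
\[ \frac{\psi_1(x)-\psi_1(x-h)}{h}\le \psi(x)\le \frac{\psi_1(x+h)-\psi_1(x)}{h}. \]
Substituting $\psi_1(y)=y^2/2+R_1(y)$, the main terms produce $x\pm h/2$, so subtracting $x$ leaves
\[ |R(x)|\le \frac{h}{2}+\frac{2}{h}\max_{|u-x|\le h}|R_1(u)|. \]
Setting $M:=\max_{0\le u\le 3N/2}|R_1(u)|$ and choosing $h=2\sqrt{M}$ balances the two terms and yields $|R(x)|\ll\sqrt{M}$ for $x\le N$, provided $h\le N/2$ so that the window $[x-h,x+h]$ stays inside $[0,3N/2]$. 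Summing against $\Lambda_0$ and invoking the Chebyshev bound then gives $\sum_{n\le N}\Lambda_0(n)R(N-n)\ll N\sqrt{M}$, and with $2R_1(N)=O(|R_1(N)|)$ this is exactly \eqref{thm1-2}.

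I expect the main obstacle to be precisely the range restriction to $[0,3N/2]$, which is what limits the admissible choice of $h$. The optimal choice $h=2\sqrt{M}$ is legitimate only when $M\le N^2/16$; the complementary regime $M>N^2/16$ I would dispatch separately by the trivial bound $|R(x)|\ll N$ coming from $\psi(x)\ll x\ll N$, under which $\sum_{n\le N}\Lambda_0(n)R(N-n)\ll N^2\ll N\sqrt{M}$, so that \eqref{thm1-2} holds in both cases. Extracting the square-root savings while keeping the averaging window bounded by $3N/2$ is the one genuinely delicate point; the remainder of the argument is the triangle inequality and Chebyshev's estimate.
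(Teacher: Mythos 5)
Your proof is correct and takes essentially the same route as the paper's: the decomposition from Lemma \ref{lem1} together with the Chebyshev bound $\sum_{n\le N}|\Lambda_0(n)|\ll N$, the integral estimate $R_1(x)\le x\max_{0\le u\le x}|R(u)|$ for \eqref{thm1-1}, and Ingham's monotonicity sandwich for $\psi$ with the balancing choice $h\asymp\sqrt{\max|R_1|}$ and the resulting bound $R(x)\ll\sqrt{\max|R_1(u)|}$ over a window of radius $\asymp x/2$ for \eqref{thm1-2}. Your only deviations are cosmetic and, if anything, slightly more careful: you absorb the $O(N)$ term via the trivial lower bounds $R(u)=-u$, $R_1(u)=-u^2/2$ for $u<2$ where the paper invokes Schmidt's $\Omega_\pm(x^{1/2})$, and you explicitly dispatch the degenerate regime $M>N^2/16$ by the trivial bound $|R(x)|\ll N$, where the paper simply asserts that its choice of $h$ ensures $h<x/2$.
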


We remark that the bounds \eqref{thm1-1} and \eqref{thm1-2} are only essentially equivalent in the zero-free region case where $\eta(|t|)\to0$ as $|t|\to \infty$.
In the RH case, \eqref{thm1-1} gives about the expected correct error bound, while \eqref{thm1-2} degrades the error.
We explain this here briefly.
von Koch \cite{vonKoch1901} showed that RH implies
$$ R(x) = O(x^{1/2}\log^2\!x), $$
meanwhile it has been shown by Schmidt \cite{Sch03} that
\begin{equation}\label{R-omega}
R(x) = \Omega_\pm\!(x^{1/2}).
\end{equation}
Using Lemma \ref{lem1} along with PNT and the $\Omega$-estimates \eqref{R1-Omega} and \eqref{R-omega}, we see that
\begin{equation*}
G(N) = \frac{N^2}{2} + \Omega_\pm(N^{3/2}).
\end{equation*}
We thus recall from \eqref{E_underRH}, that RH gives the best possible bound for $E(N)$.
By \eqref{thm1-1} of Proposition \ref{thm1-interm}, assuming RH we have
$$ G(N) = \frac{N^2}{2} + O(N^{3/2}\log^2\!N), $$
which is close to the right RH-bound \eqref{E_underRH} except for a $\log$ factor.
In view of \eqref{thm1-2} of Proposition \ref{thm1-interm}, even though the estimate \eqref{R1_underRH} gives the right bound, the second term in the error of \eqref{thm1-2} introduces an extra $1/4$ in the exponent.
A similar situation occurs as well in the quasi-RH case.

\begin{proof}[Proof of Proposition \ref{thm1-interm}]
By Lemma \ref{lem1}, it is immediate that
\begin{equation}\label{GwithRs}
\begin{aligned}
G(N) - \frac{N^2}{2} &\ll |R_1(N)| + \left|\sum_{n\le N}\Lambda_0(n)R(N-n)\right| + N \\
&\le |R_1(N)| + \max_{0\le u\le N}\left|R(u)\right| \sum_{n\le N} \left|\Lambda(n)-1\right| + N \\
&\le |R_1(N)| + \max_{0\le u\le N}\left|R(u)\right| (R(N) + 2N) + N \\
&\ll |R_1(N)| + N\max_{0\le u\le N}\left|R(u)\right|,
\end{aligned}
\end{equation}
taking into account \eqref{R-omega}.
Recalling \eqref{R1}, we can trivially bound $R_1$ as
\begin{equation}\label{R1byR}
R_1(x) = \int_0^x R(u)\, du \le x\max_{0\le u\le x}|R(u)|.
\end{equation}
Hence
\begin{align*}
G(N) - \frac{N^2}{2} \ll N\max_{0\le u\le N}\left|R(u)\right|,
\end{align*}
which is the first estimate in Proposition \ref{thm1-interm}.

To prove the second estimate, we follow the argument of Ingham \cite[p. 64]{Ingham1932}.
Since $\psi(x)$ is an increasing function of $x\ge0$, we have for any $x>0$ and $0<h<x/2$,
\begin{equation}\label{psi-ineq}
\frac1h \int_{x-h}^x \psi(u)\,du \le \psi(x) \le \frac1h \int_x^{x+h} \psi(u)\,du.
\end{equation}
Recalling \eqref{psi1} and again \eqref{R1}, we see that the integrals on both sides of \eqref{psi-ineq} are
\begin{align*}
\frac{\psi_1(x\pm h) - \psi_1(x)}{\pm h} = x \pm \frac{h}2 \pm \frac{R_1(x\pm h) - R_1(x)}h.
\end{align*}
Thus
\begin{align*}
\frac{\psi_1(x\pm h) - \psi_1(x)}{\pm h} - x
\ll h + \frac1h \max_{\frac{x}2\le u \le \frac{3x}2} |R_1(u)|.
\end{align*}
Balancing the two terms on the right-hand side above shows that taking
$$ h = \frac{\sqrt{\displaystyle \max_{\frac{x}2\le u \le \frac{3x}2} |R_1(u)|}}2 $$
gives the right size of the bound and ensures $h<x/2$.
Thus
\begin{align*}
\frac{\psi_1(x\pm h) - \psi_1(x)}{\pm h} - x
\ll \sqrt{\max_{\frac{x}2\le u \le \frac{3x}2} |R_1(u)|}
\end{align*}
and by the inequalities \eqref{psi-ineq}, we obtain
$$ \psi(x) = x + O\left(\sqrt{\max_{\frac{x}2\le u \le \frac{3x}2} |R_1(u)|}\right), $$
which implies
\begin{equation}\label{RfromR1}
R(x) \ll \sqrt{\max_{\frac{x}2\le u \le \frac{3x}2} |R_1(u)|}.
\end{equation}
The estimate \eqref{GwithRs} then implies
$$ G(N) - \frac{N^2}{2} \ll |R_1(N)| + N\sqrt{\max_{0\le u \le \frac{3N}2} |R_1(u)|}. $$
\end{proof}

We next show a modified version of Proposition \ref{thm1-interm} in terms of bounds of the shape \eqref{Rbound}.
\begin{prop}\label{cor4}
The error in the formula for the average number of Goldbach representations is determined by the error in PNT as follows.
\begin{enumerate}[label={(\alph*)}]
\item\label{cor4a}
Assume that for any sufficiently large $x>0$,
$$ R(x) \ll x\exp(-Cf(x)) $$
for some constant $C>0$ and some continuous increasing function $f(x)$ of $x\ge0$, then
$$ R_1(x) \ll x^2\exp(-Cf(x)) $$
for any sufficiently large $x>0$, and for $N\ge4$,
\[ G(N) = \frac{N^2}{2} + O\left(N^2\exp(-Cf(N))\right). \]

\item\label{cor4b}
Similarly, suppose that for any sufficiently large $x>0$,
$$ R_1(x) \ll x^2\exp(-C_1g(x)) $$
for some constant $C_1>0$ and some continuous increasing function $g(x)$ of $x\ge0$, then for any sufficiently large $x>0$ and $N\ge4$, we have
$$ R(x) \ll x\exp\left(-\frac{C_1}2g\left(\frac{x}2\right)\right) $$
and
\[ G(N) = \frac{N^2}{2} + O\left(N^2\exp\left(-\frac{C_1}2g\left( \frac{N}2\right)\right)\right). \]
\end{enumerate}
\end{prop}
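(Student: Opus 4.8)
The plan is to obtain both halves of Proposition~\ref{cor4} as direct applications of Proposition~\ref{thm1-interm}, reusing the two auxiliary inequalities that already appear in its proof: the trivial bound \eqref{R1byR}, $R_1(x)\le x\max_{0\le u\le x}|R(u)|$, and its partial converse \eqref{RfromR1}, $R(x)\ll\sqrt{\max_{x/2\le u\le 3x/2}|R_1(u)|}$. Essentially all that is required is to feed the hypothesized majorants $xe^{-Cf(x)}$ and $x^2e^{-C_1g(x)}$ through these inequalities and through \eqref{thm1-1}, keeping track of where the various maxima are attained.

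For part~\ref{cor4a} I would first substitute $R(x)\ll xe^{-Cf(x)}$ into \eqref{R1byR}. Because the exponent grows slowly (discussed below), the function $u\mapsto ue^{-Cf(u)}$ is eventually increasing, so $\max_{0\le u\le x}|R(u)|\ll xe^{-Cf(x)}$, and therefore $R_1(x)\ll x\cdot xe^{-Cf(x)}=x^2e^{-Cf(x)}$, which is the first assertion. For the statement about $G(N)$ I would bypass $R_1$ entirely and apply \eqref{thm1-1} directly: the same monotonicity gives $N\max_{0\le u\le N}|R(u)|\ll N^2e^{-Cf(N)}$, which is exactly the claimed error. For the finitely many $N$ with $4\le N$ below the threshold at which the hypothesis on $R$ takes effect, both $G(N)$ and its error are bounded, so enlarging the implied constant suffices.

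Part~\ref{cor4b} runs the same machine in reverse. Inserting $R_1(x)\ll x^2e^{-C_1g(x)}$ into \eqref{RfromR1}, I bound the inner maximum over $u\in[x/2,3x/2]$ by replacing $u^2$ with $(3x/2)^2\ll x^2$ and, using only that $g$ is increasing, replacing $e^{-C_1g(u)}$ by its largest value $e^{-C_1g(x/2)}$; taking square roots yields $R(x)\ll xe^{-\frac{C_1}{2}g(x/2)}$. This step is clean precisely because it needs no monotonicity of the product, only of $g$. The $G(N)$ estimate then follows by inserting this new $R$-bound into \eqref{thm1-1} exactly as in part~\ref{cor4a}, producing $G(N)=\frac{N^2}{2}+O\!\left(N^2e^{-\frac{C_1}{2}g(N/2)}\right)$.

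The one genuinely delicate point, and the main obstacle, is justifying that $u\mapsto ue^{-Cf(u)}$ (and likewise $u\mapsto ue^{-\frac{C_1}{2}g(u/2)}$) is increasing for large $u$, so that the maxima in \eqref{R1byR} and \eqref{thm1-1} sit at the upper endpoint rather than in the interior. Differentiating, $ue^{-Cf(u)}$ increases exactly when $Cuf'(u)<1$, so this is a statement about the slow growth of the exponent. That such slow growth is forced rather than assumed can be seen from the lower bound \eqref{R-omega}: since $R(x)=\Omega_\pm(x^{1/2})$, any admissible majorant must satisfy $xe^{-Cf(x)}\gg x^{1/2}$ infinitely often, whence $f(x)\ll\log x$; for the zero-free-region functions $\eta$ and the associated $\omega$ of interest (which are polylogarithmic in size) one has $uf'(u)\to0$ and the monotonicity is immediate. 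I would record this as a brief preliminary observation, after which the two chains of inequalities above complete the argument.
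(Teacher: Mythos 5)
Your proposal is correct and takes essentially the same route as the paper: part \ref{cor4a} feeds the hypothesis through \eqref{R1byR} and \eqref{thm1-1}, and part \ref{cor4b} through \eqref{RfromR1}, your only (immaterial) deviation being that you derive the final $G(N)$ bound in \ref{cor4b} from \eqref{thm1-1} with the new $R$-bound, where the paper cites \eqref{thm1-2}. The monotonicity of the majorant that you rightly isolate as the delicate point---needed so that the maxima in \eqref{R1byR} and \eqref{thm1-1} sit at the upper endpoint---is left entirely implicit in the paper's proof; it is harmless in the intended applications since for $f=C\omega$ with $C<1$ and $\omega$ as in \eqref{Ingham_omega2} the exponent is a concave increasing function of $\log x$ with slope at most $1/2$, so $x\exp(-Cf(x))$ is automatically increasing.
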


\begin{proof}[Proof of Proposition \ref{cor4}]
Assume that for some constant $C>0$ and some continuous increasing function $f(x)$ of $x\ge0$,
$$ R(x) \ll x\exp(-Cf(x)) $$
for any $x>0$.
Then \eqref{R1byR} implies that for any $x>0$,
$$ R_1(x) \ll x^2\exp(-Cf(x)) $$
and by \eqref{thm1-1} of Proposition \ref{thm1-interm}, we have for any $N\ge4$,
\begin{align*}
G(N) = \frac{N^2}{2} + O\left( N^2\exp(-Cf(N))\right).
\end{align*}
This proves \ref{cor4a} of Proposition \ref{cor4}.

Now suppose that for $x>0$, we have
$$ R_1(x) \ll x^2\exp(-C_1g(x)) $$
for some constant $C_1>0$ and some continuous increasing function $g(x)$ of $x\ge0$.
Then \eqref{RfromR1} implies that for any sufficiently large $x>0$,
$$ R(x) \ll x\exp\left(-\frac{C_1}2g\left(\frac{x}2\right)\right), $$
and by \eqref{thm1-2} of Proposition \ref{thm1-interm}, we have
\[ G(N) = \frac{N^2}{2} + O\left(N^2\exp\left(-\frac{C_1}2g\left(\frac{N}2\right)\right)\right) \]
for any $N\ge4$.
This is \ref{cor4b} of Proposition \ref{cor4}.
\end{proof}

Theorem \ref{thm1} is an immediate consequence of the above Proposition \ref{cor4} and \ref{Ingham-a} of \hyperref[thm-InghamPintz]{PNT-ZFR}.
\begin{proof}[Proof of Theorem \ref{thm1}]
Let $\eta(u)$ be a function satisfying the conditions in \hyperref[thm-InghamPintz]{PNT-ZFR}.
Suppose that $\zeta(\sigma+it)\neq0$ in the region
$$ \sigma > 1 - \eta(|t|). $$
Applying \ref{Ingham-a} of \hyperref[thm-InghamPintz]{PNT-ZFR}, we have for some constant $0<C<1$ that
\[ R(x) = O(x\exp(-C\omega(x))), \]
where $\omega(x)$ is as defined in \eqref{Ingham_omega2}.
Then \ref{cor4a} of Proposition \ref{cor4} implies that
\[ G(N) = \frac{N^2}{2} + O\left(N^2\exp(-C\omega(N))\right) \]
holds for any $N\ge4$.
\end{proof}

\begin{proof}[Proof of Corollary \ref{cor1-1}]
Applying Theorem \ref{thm1}, the assumption with $\eta(u)$ being
\begin{equation}\label{zfr-assump}
\eta(u) = \frac{c}{(\log{(u+3)})^a(\log\log{(u+3)})^b},
\quad c>0, \text{ and } (a,b)\in[0,1]\times\mathbb{R} \text{ or } (a,b)\in\{0\}\times\mathbb{R}_{\ge0},
\end{equation}
implies for any $N\ge4$,
\[ G(N) = \frac{N^2}{2} + O\left(N^2\exp(-C\omega(N))\right), \]
where $\omega(x)$ is as defined in \eqref{Ingham_omega2}.
Since $\eta(u)\log{x}$ is a positive decreasing function of $u\ge1$ and $\log{u}$ is a positive increasing function of $u\ge1$, the minimum of $\eta(u)\log{x}+\log{u}$ as a function of $u\ge1$ is attained at the critical point $u=u_0$ which is determined by
\begin{equation}\label{min_pt}
u_0\eta'(u_0) = -\frac1{\log{x}}.
\end{equation}
Solving \eqref{min_pt} for $u_0$ with $\eta(u)$ as in \eqref{zfr-assump}, we have
$$
\log\log{u_0} = \frac{\log\log{x}}{1+a} \left( 1+ O\left(\frac{\log\log\log{x}}{\log\log{x}}\right) \right)
$$
by taking logarithm on both sides of \eqref{min_pt}.
Substituting this back into \eqref{min_pt} we obtain
$$
\log{u_0} = \left(\frac{\mathfrak{a}c(1+a)^b\log{x}}{(\log\log{x})^b}\right)^{\frac1{1+a}} \left( 1+ O\left(\frac{\log\log\log{x}}{\log\log{x}}\right) \right),
\quad \text{where } \mathfrak{a} :=
\begin{cases}
1 &\text{ if } a=0, \\
a &\text{ if } 0<a\le1.
\end{cases}
$$
Hence
$$
\omega(x) = (1+a)\left(\frac{\mathfrak{a}c(1+a)^b\log{x}}{(\log\log{x})^b}\right)^{\frac1{1+a}} \left( 1+ O\left(\frac{\log\log\log{x}}{\log\log{x}}\right) \right)
$$
which gives
\begin{align*}
G(N) = \frac{N^2}{2} + O\left(N^2\exp\left(-((1+a)C-\varepsilon)(\mathfrak{a}c(1+a)^b)^{\frac1{1+a}}(\log{N})^{\frac1{1+a}}(\log\log N)^{-\frac{b}{1+a}}\right)\right),
\end{align*}
for any small $0<\varepsilon<1$.
Thus there exists a constant $c'>0$ such that
\[ G(N) = \frac{N^2}{2} + O\left(N^2 \exp\left(-c'(\log{N})^{\frac1{1+a}}(\log\log N)^{-\frac{b}{1+a}}\right)\right). \]
This completes the proof of Corollary \ref{cor1-1} and concludes this section.
\end{proof}

\section{The error for \texorpdfstring{$G(N)$}{G(N)} and the remainder term \texorpdfstring{$R(x)$}{R(x)} in PNT --- Proof of Theorem \ref{thm2}}
\label{sec-converse}

In this section, we establish a converse of Theorem \ref{thm1}.
Suppose that for some continuous increasing positive function $f$,
\[ G(N) = \frac{N^2}{2} + O\left(N^2\exp(-f(N))\right) \]
for $N\ge4$.
We first note that it suffices to assume that
\begin{equation*}
f(N) \le \frac12\log{N},
\end{equation*}
due to the omega-bound \eqref{R1-Omega}.
On the other hand, by Theorem \ref{thm1}, the classical zero-free region \eqref{classicZFR} implies that
\begin{equation*}
f(N) \gg \sqrt{\log{N}},
\end{equation*}
or even better, using the Korobov-Vinogradov zero-free region \eqref{KorVinoZFR} it suffices to assume
\begin{equation*}
f(N) \gg (\log{N})^{3/5}(\log\log{N})^{1/5}.
\end{equation*}
We now make the following assumption on the error term in the asymptotic formula for $G(N)$.
\begin{assume}\label{Gassumption}
For any $N\ge4$, we have
\[ G(N) = \frac{N^2}{2} + O\left(N^2\exp(-f(N))\right), \]
where $f$ is a continuous increasing function satisfying
\begin{equation*}
(\log{x})^{3/5}(\log\log{x})^{1/5} \ll f(x) \le \frac12\log{x}, \qquad \text{for all }~ x\ge3.
\end{equation*}
\end{assume}

The goal here is to prove the following theorem and use it to deduce Theorem \ref{thm2}.
\begin{prop}\label{thm2-BRgen}
Suppose $G(N)$ is as in Assumption \ref{Gassumption}. Then there exists a constant $C>0$ such that
\[ R(x) = O\left(x\exp\left(-Cf\left(\left(\frac{x}2\right)^{1/A}\right)\right)\right) \]
holds for $x\ge3$ and any real $A>1$.
\end{prop}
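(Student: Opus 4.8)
The plan is to run the whole argument through the power-series (Abel) average of the Goldbach count, following and generalizing Bhowmik--Ruzsa \cite{BhowRuzsa2018}. Set $P(r) := \sum_n \Lambda(n) r^n$, which is nonnegative for $r \in (0,1)$ since $\Lambda \ge 0$, and decompose $P(r) = \tfrac{r}{1-r} + A(r)$ with $A(r) := \sum_n \Lambda_0(n) r^n$; by Abel summation $A(r) = (1-r)\sum_N R(N) r^N$, so $A$ is the generating function attached to the PNT remainder. Because $\psi_2 = \Lambda * \Lambda$, summation by parts gives $\sum_N G(N) r^N = P(r)^2/(1-r)$, hence
\[
\sum_N E(N) r^N = \frac{P(r)^2}{1-r} - \frac{r(1+r)}{2(1-r)^3}.
\]
First I would fix $r = e^{-1/N_0}$ and insert Assumption \ref{Gassumption} in the crude form $|E(N)| \ll N^2 e^{-f(N)}$. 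Since $f$ is increasing with $f(N)\le \tfrac12\log N$, the weight $N^2 e^{-f(N)}$ is increasing, and the factor $e^{-N/N_0}$ localizes the sum near $N\asymp N_0$, so $\sum_N E(N) r^N \ll N_0^3 e^{-f(N_0)}$.

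Multiplying by $(1-r)$ yields $P(r)^2 = \tfrac{r(1+r)}{2(1-r)^2} + O\!\left(N_0^2 e^{-f(N_0)}\right)$, and here the positivity of $P$ is decisive. Taking the unambiguous nonnegative square root and using $\sqrt{r(1+r)/2} = r + \tfrac{1-r}{4} + \cdots$ gives $P(r) = \tfrac{r}{1-r} + O(1) + \tfrac{r}{1-r}O(e^{-f(N_0)})$, whence the smoothed bound
\[
A(r) \ll N_0\, e^{-f(N_0)}, \qquad\text{equivalently}\qquad \int_0^\infty R(u)\,e^{-u/N_0}\,du \ll N_0^2\, e^{-f(N_0)},
\]
valid for all $N_0 \ge 1$ (the $O(1)$ constants from $-\zeta'/\zeta$ at $s=0$ and the discrete/continuous discrepancy $\tfrac{r}{1-r}=N_0-\tfrac12+\cdots$ are harmless because $N_0 e^{-f(N_0)} \ge N_0^{1/2} \gg 1$). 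Integrating once more in $u$ gives the companion estimate $\int_0^\infty R_1(u)\,e^{-u/N_0}\,du \ll N_0^3\, e^{-f(N_0)}$, which I would carry forward since $R_1$ is the smoother object. Note that this smoothed bound is genuinely nontrivial precisely in the range of Assumption \ref{Gassumption}, where $e^{-f(N_0)}$ is a real saving, and it is what ultimately forces the zeros back from the line $\operatorname{Re}(s)=1$.

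The remaining and hardest step is to remove the smooth weight, i.e. to convert these Abel averages into a pointwise bound. The obstruction is intrinsic: $R_1 = \Omega_\pm(N^{3/2})$ by \eqref{R1-Omega}, so a bound on a non-localized average cannot by itself control one value, and one must pay for localization. I would first de-smooth $R_1$, exploiting that $\psi_1$ is increasing and convex (its distributional second derivative is the nonnegative measure $d\psi$) together with the trivial Chebyshev Lipschitz bound $R = O(x)$; comparing the averages across a geometric range of scales and balancing the localization error against the saving $e^{-f(N_0)}$ pins the admissible scale to a fixed power $N_0 \asymp x^{1/A}$ of the target point. It is exactly this optimization that degrades the argument of $f$ from $x$ to $x^{1/A}$, the free parameter $A>1$ recording the trade-off, and it yields a pointwise estimate of the shape $R_1(x) \ll x^2 \exp(-C' f(x^{1/A}))$. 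Finally I would invoke the Ingham-type inequality \eqref{RfromR1}, namely $R(x) \ll \sqrt{\max_{x/2\le u\le 3x/2}|R_1(u)|}$, which supplies the square root (hence the constant factor in front of $f$) and the shift to $x/2$, giving $R(x) \ll x\exp(-C f((x/2)^{1/A}))$ as claimed. This weight-removal step is the main obstacle, and it is where essentially all of the technical work of generalizing \cite{BhowRuzsa2018} from $f(x)=c\log x$ to a general increasing $f$ resides.
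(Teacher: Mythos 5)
Your reduction to the smoothed bound is essentially the paper's Lemmas \ref{lem2-thm2} and \ref{lem3-thm2} evaluated at the single real point $z=r$, and it is fine up to one repair noted below; the fatal gap is your weight-removal step. A pointwise bound on $R_1$ of the claimed strength cannot be extracted from the real-axis Abel averages together with monotonicity, convexity of $\psi_1$, and $R(x)=O(x)$, because the kernel $e^{-u/N_0}$ annihilates high frequencies at an exponential rate. Concretely, for $T\ge2$ the function $\tilde\psi(u)=u+\frac{u}{2T}\sin(T\log u)$ is increasing, its remainder $\tilde R(u)=\frac{u}{2T}\sin(T\log u)$ satisfies $\tilde R(u)=O(u)$, and
\[
\int_1^\infty \tilde R(u)e^{-u/N_0}\,du=\frac{1}{2T}\,\Im\bigl(\Gamma(2+iT)\,N_0^{2+iT}\bigr)+O(1)\ll N_0^2e^{-T},
\]
so at every scale $N_0$ with $f(N_0)\le T$ --- a range reaching at least $e^{2T}$ by the upper bound on $f$ in Assumption \ref{Gassumption} --- it obeys your smoothed estimate with room to spare; yet at points $x$ near the top of that range the pointwise saving is only $\frac1{2T}\gg\frac1{\log x}$. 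Since your scheme only uses scales $N_0\asymp x^{1/A}\le x$, no comparison or balancing across such scales can beat a $1/\log x$-type saving, which is exponentially weaker than the required $\exp(-C'f(x^{1/A}))$ for every admissible $f$ with $f(x)=o(\log x)$. Equivalently, in arithmetic terms: by the explicit formula \eqref{PsiExplicit} and the decay $|\Gamma(\beta+i\gamma)|\ll e^{-|\gamma|}$, the real-axis data is essentially blind to zeros of large height, whereas the conclusion of Proposition \ref{thm2-BRgen}, fed into \ref{Ingham-b} of PNT-ZFR, must exclude zeros near $\sigma=1$ at all large heights. Localization must be paid for with genuinely high-frequency information, and the real point of the circle carries none.

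The mechanism your plan discards is the rest of the circle $|z|=e^{-1/N}$: the paper (following Bhowmik--Ruzsa) never de-smooths a real Abel average. Instead, Lemma \ref{lem4-psi2-0} gives the exact kernel identity
\[
\sum_{n\le N}\psi^0_2(n)=\frac1{2\pi i}\int_{|z|=r}\bigl(\Psi(z)-I(z)\bigr)^2K_N(z)\,\frac{dz}{z},
\]
which, combined with the identity \eqref{lem1'} from Lemma \ref{lem1}, bounds $R_1(N)$ by a genuinely sharp-cutoff quantity. On the arc $|\alpha|\le\delta$ one inserts Lemma \ref{lem3-thm2} --- your square-root step, but needed for complex $z$, where positivity of $\Psi$ is unavailable and the branch is fixed by continuity from the real point --- while on $[\delta,1/2]$ Parseval supplies exactly the high-frequency control your plan lacks, namely $\int_0^1|\Psi-I|^2\,d\alpha=\sum_n(\Lambda(n)-1)^2r^{2n}\ll N\log N$; balancing $\delta$ yields $R_1(N)\ll N^2\exp\bigl(-\frac25f(N^{1/A})\bigr)(\log N)^{4/5}$, and then \eqref{RfromR1} finishes as in your final step (which you did identify correctly, including the origin of the $x/2$ and of the constant in front of $f$). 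Finally, the exponent $1/A$ does not come from a de-smoothing optimization: it enters already at the smoothing stage (Lemma \ref{lem2-thm2}), where one must split $\sum_n n^2|z|^ne^{-f(n)}$ at $n=N^{1/A}$ because the small-$n$ terms carry no exponential saving; your interim claim $\sum_N E(N)r^N\ll N_0^3e^{-f(N_0)}$ presupposes that $N^2e^{-f(N)}$ is increasing, which Assumption \ref{Gassumption} does not guarantee, and the correct right-hand side is $N_0^3\exp\bigl(-f(N_0^{1/A})\bigr)$.
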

To prove Proposition \ref{thm2-BRgen}, we follow the argument of Bhowmik and Ruzsa \cite{BhowRuzsa2018} where we use a smoothed version of the average $G(N)$.
Before we begin, we remark that it follows from the proof that the constant $C$ in Proposition \ref{thm2-BRgen} satisfies $0<C<1$.
Let
\begin{equation}\label{smoothedPsi}
\Psi(z) := \sum_n\Lambda(n)z^n,
\end{equation}
where the sum runs over all positive integers $n$.
Using the above notation, we can then write
\begin{equation}\label{smoothedPsisquare}
\begin{aligned}
\Psi(z)^2 = \sum_m\sum_{m'}\Lambda(m)\Lambda(m')z^{m+m'}
= \sum_n \psi_2(n)z^n,
\end{aligned}
\end{equation}
which is a smoothed version of $G(N)$.

\begin{lem}\label{lem2-thm2}
Suppose $G(N)$ is as in Assumption \ref{Gassumption} and let $|z|=e^{-1/N}$ where $N\ge4$.
For any real $A>1$, we have
\begin{equation*}
\Psi(z)^2 = \frac1{(1-z)^2} + O\left(|1-z|N^3\exp\left(-f\left(N^{1/A}\right)\right)\right).
\end{equation*}
\end{lem}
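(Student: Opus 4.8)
The plan is to pass from the power series $\Psi(z)^2=\sum_n\psi_2(n)z^n$ to the partial sums $G(N)=\sum_{n\le N}\psi_2(n)$ by Abel summation, so that Assumption \ref{Gassumption} can be fed in directly. Since $\frac{1}{(1-z)^2}=\sum_{n\ge0}(n+1)z^n$, I would set $d_n:=\psi_2(n)-(n+1)$ and $D(x):=\sum_{0\le n\le x}d_n=G(x)-\tfrac{x^2}{2}+O(x)$, so that Assumption \ref{Gassumption} yields $D(x)\ll x^2\exp(-f(x))+1$ (the additive $1$ and the $O(x)$ being absorbed, since $e^{-f(x)}\ge x^{-1/2}$ forces $x\ll x^2e^{-f(x)}$ once $f(x)\le\tfrac12\log x$). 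Because $|z|=e^{-1/N}<1$ the series converges absolutely and the boundary terms vanish, so partial summation gives the exact identity
\[
\Psi(z)^2-\frac{1}{(1-z)^2}=-(\log z)\int_0^\infty D(x)\,z^x\,dx .
\]

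Next I would extract the factor $|1-z|$ and reduce to a single real integral. Writing $z=e^{-1/N+i\theta}$ on the circle, one has $|\log z|^2=\tfrac{1}{N^2}+\theta^2$, while $|1-z|^2=(1-e^{-1/N})^2+4e^{-1/N}\sin^2(\theta/2)\gg\tfrac1{N^2}+\theta^2$ (using $\sin(\theta/2)\gg\theta$ and $e^{-1/N}\ge\tfrac12$ for $N\ge4$), whence $|\log z|\ll|1-z|$ uniformly. Combined with $|z^x|=e^{-x/N}$ and the bound on $D$, this gives
\[
\Bigl|\Psi(z)^2-\tfrac{1}{(1-z)^2}\Bigr|\ll |1-z|\int_0^\infty x^2e^{-f(x)}e^{-x/N}\,dx+|1-z|\,N,
\]
and the second term is negligible against the target since $f(N^{1/A})\le\tfrac1{2A}\log N\ll2\log N$. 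Everything thus reduces to showing $\int_0^\infty x^2e^{-f(x)}e^{-x/N}\,dx\ll N^3\exp(-f(N^{1/A}))$.

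This integral estimate is the crux and the main obstacle. I would split at $X=N^{1/A}$. On $[X,\infty)$ the monotonicity of $f$ gives $e^{-f(x)}\le e^{-f(X)}$, so that piece is at most $e^{-f(X)}\int_0^\infty x^2e^{-x/N}\,dx=2N^3e^{-f(N^{1/A})}$, exactly the desired size. The delicate part is the head $\int_0^X x^2e^{-f(x)}\,dx$: the crude bound $e^{-f(x)}\le1$ only yields $\ll X^3=N^{3/A}$, which for $A$ close to $1$ exceeds the target (it works only for $A\ge7/6$). The clean remedy is to exploit that $x^2e^{-f(x)}$ is increasing, i.e. $xf'(x)\le2$; this holds for the slowly varying $f$ arising from zero-free regions, where $xf'(x)\ll f(x)/\log x\le\tfrac12$. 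Then $\int_0^X x^2e^{-f(x)}\,dx\le X\cdot X^2e^{-f(X)}=X^3e^{-f(X)}\le N^3e^{-f(N^{1/A})}$, using $X\le N$. Summing the two pieces proves the integral bound for \emph{every} $A>1$, and feeding it back into the previous step establishes the lemma. (For $N$ in any bounded range the statement is trivial after enlarging the implied constant, so one may assume $N$ large, in particular $N^{1/A}\ge3$, where $f$ is controlled.) The regularity input $xf'(x)\le2$ for the head is precisely where I expect the argument to require care.
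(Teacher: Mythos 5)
Your proposal follows essentially the same route as the paper: the paper likewise Abel-sums, writing $\Psi(z)^2=(1-z)\sum_n G(n)z^n$, inserts Assumption \ref{Gassumption}, handles the main term via the closed form of $\sum_n n^2z^n$ (which sidesteps your $|\log z|\ll|1-z|$ reduction, though that step of yours is correct and the same estimates appear later in the paper as \eqref{thm4bound2}--\eqref{thm4bound3}), and then splits the error sum at $N^{1/A}$ and at $N$, treating everything beyond $N^{1/A}$ by monotonicity of $f$ exactly as you treat your tail integral. Your lower-order terms ($O(N)$ from the $-\frac{3/2}{1-z}+\frac12$ pieces, absorbed since $f(N^{1/A})\le\frac1{2A}\log N$) match the paper's bookkeeping as well.

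The one genuine divergence is the head piece, and you have correctly located the soft spot. The paper bounds $\sum_{n\le N^{1/A}}n^2|z|^ne^{-f(n)}$ trivially by $N^{3/A}$ and then absorbs $N^{3/A}\ll N^3\exp\left(-f\left(N^{1/A}\right)\right)$; as you compute, from the envelope $f(x)\le\frac12\log x$ alone this absorption is guaranteed only for $A\ge7/6$, while for $1<A<7/6$ it holds whenever $f(N^{1/A})\le(3-\frac3A)\log N$, in particular for every $f=o(\log x)$ --- which covers the intended application, since Theorem \ref{thm2} targets the case $\eta(|t|)\to0$ and the paper explicitly cedes the quasi-RH regime ($f\asymp\log x$) to other methods. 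Your patch, monotonicity of $x^2e^{-f(x)}$, i.e.\ $xf'(x)\le2$, does close the gap for all $A>1$, but be aware it is not licensed by Assumption \ref{Gassumption}, which asks of $f$ only continuity, monotonicity, and the two envelopes: an admissible $f$ hugging the lower envelope up to some $x_0$ and then climbing steeply to $\frac12\log x_0$ defeats any head estimate better than $N^{3/A-o(1)}$ at $N\approx x_0^A$, so some such regularity (or the restriction $A\ge7/6$) is genuinely needed for this line of argument --- a point the paper's write-up glosses over. You should therefore state your condition as an added hypothesis; note that it is automatic in the application, since $f=C\varpi$ with $\varpi(x)=\min_{u\ge0}(\eta(u)\log x+u)$ is an infimum of affine functions of $\log x$, hence concave in $\log x$, giving (a.e.) $xf'(x)\le f(x)/\log x\le\frac12$. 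With that proviso your proof is correct and, at the delicate point, more scrupulous than the paper's own.
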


\begin{proof}[Proof of Lemma \ref{lem2-thm2}] We apply \eqref{smoothedPsisquare} and rewrite $\psi_2(n)$ as $G(n)-G(n-1)$ so that
\begin{align*}
\Psi(z)^2 = \sum_n \psi_2(n)z^n
= \sum_n\left(G(n)-G(n-1)\right)z^n
= (1-z)\sum_nG(n)z^n.
\end{align*}
By Assumption \ref{Gassumption},
\begin{align*}
\Psi(z)^2 &= (1-z)\sum_n\left(\frac{n^2}{2} + O\left(n^2\exp(-f(n))\right)\right)z^n \\
&= (1-z)\left(\sum_n\frac{n^2}2z^n + O\left(\left|\sum_nn^2z^n\exp(-f(n))\right|\right)\right).
\end{align*}
Next using the power series expansion
$$
\sum_nn^2z^n = \frac2{(1-z)^3}-\frac3{(1-z)^2} + \frac1{1-z} \qquad \text{for} \quad |z|<1,
$$
for the main term, we have
\begin{align} \label{psi2-expansion}
\Psi(z)^2 = \frac1{(1-z)^2}-\frac{3/2}{1-z} + \frac12 + O\left(|1-z|\sum_nn^2|z|^n\exp(-f(n))\right).
\end{align}
The error term in \eqref{psi2-expansion} is handled as follows. Since $f(x)$ is continuously increasing and satisfies the lower bound in Assumption \ref{Gassumption}, for any real $A>1$, we have for $|z|=e^{-1/N}$, 
\begin{align*}
\sum_n n^2|z|^n\exp(-f(n))
&= \left(\sum_{n\le N^{1/A}} + \sum_{N^{1/A}<n\le N} + \sum_{n>N}\right) n^2|z|^n\exp(-f(n)) \\
&\ll N^{2/A} \sum_{n\le N^{1/A}} 1 + N^3\exp\left(-f\left(N^{1/A}\right)\right) + N^3\exp\left(-f(N)\right) \\
&\ll N^{3/A} + N^3\exp\left(-f\left(N^{1/A}\right)\right) \\
&\ll N^3\exp\left(-f\left(N^{1/A}\right)\right).
\end{align*}
It is clear that the third term in \eqref{psi2-expansion}, which is only a constant, can be absorbed in the above error, while for the second term, we can use for example, the power series expansion of the exponential function which gives $(1-|z|)^{-1} \sim N$. Thus \eqref{psi2-expansion} becomes
\begin{align*}
\Psi(z)^2
&= \frac1{(1-z)^2} + O(N) + O\left(|1-z|N^3\exp\left(-f\left(N^{1/A}\right)\right)\right) \\
&= \frac1{(1-z)^2} + O\left(|1-z|N^3\exp\left(-f\left(N^{1/A}\right)\right)\right).
\end{align*}
\end{proof}

We next remove the square in Lemma \ref{lem2-thm2} to obtain an asymptotic formula for $\Psi(z)$.
\begin{lem}\label{lem3-thm2}
Let $N\ge4$. Suppose $G(N)$ is as in Assumption \ref{Gassumption} and $|z|=e^{-1/N}$.
For any real $A>1$, we have
\[ \Psi(z) = \frac1{1-z} + O\left(|1-z|^2N^3\exp\left(-f\left(N^{1/A}\right)\right)\right). \]
\end{lem}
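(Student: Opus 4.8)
The plan is to take the square root of the identity proved in Lemma \ref{lem2-thm2}, the only genuine issue being the choice of branch and the behaviour of $\Psi(z)$ for $z$ far from the positive real axis. Writing $\delta := (1-z)^2\Psi(z)^2 - 1$ and multiplying the conclusion of Lemma \ref{lem2-thm2} through by $(1-z)^2$ gives $\delta = O\!\left(|1-z|^3 N^3\exp(-f(N^{1/A}))\right)$, with implied constant, say, $K$. Formally one then has $\Psi(z) = (1-z)^{-1}\sqrt{1+\delta}$, and since $\sqrt{1+\delta} = 1 + O(\delta)$ whenever $|\delta|$ stays bounded away from $1$, this yields $\Psi(z) - (1-z)^{-1} = (1-z)^{-1}O(\delta) = O\!\left(|1-z|^2 N^3\exp(-f(N^{1/A}))\right)$, which is exactly the claimed bound. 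The content of the proof is therefore to justify the branch choice and to confirm that the claim also holds (for a different reason) where $|\delta|$ is large.

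First I would fix a threshold $\rho_0 \asymp N^{-1}\exp\!\left(\tfrac13 f(N^{1/A})\right)$, chosen so that $|\delta| \le 1/4$ precisely on the arc $\{\,|z|=e^{-1/N} : |1-z|\le \rho_0\,\}$ (here the lower bound on $f$ in Assumption \ref{Gassumption} is what guarantees such a $\rho_0$ exceeds the minimal value $1-e^{-1/N}$ of $|1-z|$). On this arc the principal square root is analytic and satisfies $|\sqrt{1+\delta}-1|\le|\delta|$. The arc is connected and contains the real point $z=e^{-1/N}$, at which $(1-z)\Psi(z) = (1-e^{-1/N})\sum_n\Lambda(n)e^{-n/N}$ is real and positive. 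Since $\bigl((1-z)\Psi(z)\bigr)^2 = 1+\delta$ and $(1-z)\Psi(z)$ is continuous, the sign in $(1-z)\Psi(z)=\pm\sqrt{1+\delta}$ is constant on the arc and hence equals $+$; this pins the branch and gives $|\Psi(z)-(1-z)^{-1}| \le |\delta|/|1-z| \ll |1-z|^2 N^3\exp(-f(N^{1/A}))$ for $|1-z|\le\rho_0$.

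On the complementary arc $|1-z|>\rho_0$ I would discard the square root and argue crudely. From Lemma \ref{lem2-thm2} and subadditivity of the square root, $|\Psi(z)| \le |1-z|^{-1} + \bigl(K|1-z|N^3\exp(-f(N^{1/A}))\bigr)^{1/2}$, so that $|\Psi(z)-(1-z)^{-1}| \ll |1-z|^{-1} + \bigl(|1-z|N^3\exp(-f(N^{1/A}))\bigr)^{1/2}$. The choice of $\rho_0$ makes the target error $|1-z|^2N^3\exp(-f(N^{1/A}))$ comparable to $|1-z|^{-1}$ at $\rho_0$ and strictly larger beyond it, so both terms on the right are absorbed into $O\!\left(|1-z|^2N^3\exp(-f(N^{1/A}))\right)$ for every $|1-z|\ge\rho_0$. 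The two regimes then cover the whole circle $|z|=e^{-1/N}$, completing the estimate.

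The main obstacle is precisely the branch selection: the factorization $\Psi=(1-z)^{-1}\sqrt{1+\delta}$ is only as good as our control of the sign, and a priori the wrong branch $(1-z)\Psi(z)\approx-1$ is also consistent with $\delta\approx0$. The resolution rests entirely on the connectedness of the small-$|1-z|$ arc together with the positivity of $\Psi$ on the positive real axis. Verifying that $\rho_0$ can be chosen so that $|\delta|<1$ throughout this arc, and that no gap opens between the two regimes, is the one place where the inequalities must be tracked carefully; everything else is the routine expansion $\sqrt{1+\delta}=1+O(\delta)$.
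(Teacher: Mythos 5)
Your proof is correct and follows the same skeleton as the paper's: multiply the conclusion of Lemma \ref{lem2-thm2} by $(1-z)^2$, take a square root and use $\sqrt{1+\delta}=1+O(|\delta|)$ where the error is small, and fall back on a crude bound, absorbing $|1-z|^{-1}$ into the target error, where it is large. The genuine difference is that you make the branch selection rigorous where the paper is terse: the paper justifies the $+$ sign only by positivity of $(1-x)\Psi(x)$ for real $0<x<1$ and then applies the Taylor expansion pointwise wherever the error is less than $1$, while you observe that the region where the guaranteed bound forces $|\delta|\le 1/4$ is a \emph{connected} arc of the circle $|z|=e^{-1/N}$ containing the real point $z=e^{-1/N}$, so that the continuous function $(1-z)\Psi(z)/\sqrt{1+\delta}$, taking values in $\{\pm 1\}$, is constant there and equals $+1$. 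Your threshold $\rho_0\asymp N^{-1}\exp\left(\frac13 f\left(N^{1/A}\right)\right)$ is exactly calibrated: for $|1-z|\ge\rho_0$ both $|1-z|^{-1}$ and $\left(|1-z|N^3\exp\left(-f\left(N^{1/A}\right)\right)\right)^{1/2}$ (the latter obtained from Lemma \ref{lem2-thm2} by subadditivity of the square root, without any branch choice) are $\ll |1-z|^2N^3\exp\left(-f\left(N^{1/A}\right)\right)$, and this regime in particular covers any points far from the real axis where the actual error happens to be small but the sign of the square root is not controlled --- precisely the case the paper's pointwise case split glosses over. Two minor remarks: your appeal to the lower bound on $f$ in Assumption \ref{Gassumption} to ensure $\rho_0>1-e^{-1/N}$ is harmless but not needed (if the small arc were empty, the crude regime would cover the whole circle); and your split by $|1-z|$ versus $\rho_0$ is equivalent, up to constants, to the paper's split by the size of $|\mathcal{E}(z)|$, since the guaranteed bound on $\mathcal{E}(z)$ is monotone in $|1-z|$.
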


\begin{proof}[Proof of Lemma \ref{lem3-thm2}]
Multiplying both sides of the equation in Lemma \ref{lem2-thm2} by $(1-z)^2$, we have
\begin{align*}
(1-z)^2\Psi(z)^2 = 1 + O\left(|1-z|^3N^3\exp\left(-f\left(N^{1/A}\right)\right)\right).
\end{align*}
Note that $\Psi(x)>0$ and $(1-x)>0$ for $0<x<1$, thus taking square roots we have
\begin{align*}
(1-z)\Psi(z) = \left(1 + O\left(|1-z|^3N^3\exp\left(-f\left(N^{1/A}\right)\right)\right)\right)^{1/2}.
\end{align*}
For convenience, we write the equation above as
\[ (1-z)\Psi(z) = \left(1 + \mathcal{E}(z)\right)^{1/2}. \]

If $|\mathcal{E}(z)|<1$, we can use a Taylor expansion on the right-hand side
to show that it is $1 + O\left(|\mathcal{E}(z)|\right)$, hence
\begin{align*}
(1-z)\Psi(z) = 1 + O\left(|1-z|^3N^3\exp\left(-f\left(N^{1/A}\right)\right)\right)
\end{align*}
which implies the lemma in this case.
Now if $|\mathcal{E}(z)|\ge1$, then
\begin{align*}
(1-z)\Psi(z) = \left(1 + \mathcal{E}(z)\right)^{1/2}
\ll |\mathcal{E}(z)|^{1/2} \ll |\mathcal{E}(z)|.
\end{align*}
Thus
\begin{align*}
\Psi(z) \ll \frac{|\mathcal{E}(z)|}{|1-z|}.
\end{align*}
Since $|\mathcal{E}(x)|\ge1$, on the right-hand side we can add $|1-z|^{-1}$ so that
\begin{align*}
\Psi(z) \ll \frac{|\mathcal{E}(z)|}{|1-z|}+\frac{1}{|1-z|},
\end{align*}
which can then be written as
\begin{align*}
\Psi(z) = \frac1{1-z} + O\left(|1-z|^2N^3\exp\left(-f\left(N^{1/A}\right)\right)\right).
\end{align*}
\end{proof}

Before proving Proposition \ref{thm2-BRgen}, we prove one more lemma to express the error in Lemma \ref{lem1} in terms of $\Psi(z)$, using also the two functions
$$ I(z) := \sum_n z^n = \frac1{1-z} - 1 = \frac{z}{1-z} $$
and
$$ K_N(z) := \sum_{n\le N} \frac1{z^n}
= \frac1z + \frac1{z^2} + \frac1{z^3} + \cdots + \frac1{z^N}
= \frac1{z^N}\left(\frac{1-z^N}{1-z}\right), $$
where $|z|<1$.
\begin{lem}[Goldston and Suriajaya]\label{lem4-psi2-0}
Recall
\[ \psi^0_2(n) = \sum_{m+m'=n}\Lambda_0(m)\Lambda_0(m'), \]
as in the proof of Lemma \ref{lem1}.
Then for $N\ge4$, we have
\[ \sum_{n\le N} \psi^0_2(n) = \frac1{2\pi i} \int_{|z|=r} \left(\Psi(z) - I(z)\right)^2 K_N(z)\, \dfrac{dz}z. \]
\end{lem}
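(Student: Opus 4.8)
The plan is to treat the three ingredients as generating functions and to use the contour integral purely as a device for extracting a partial sum of coefficients. First I would note that, because $\Lambda_0(n) = \Lambda(n) - 1$,
\[
\Psi(z) - I(z) = \sum_n \Lambda(n) z^n - \sum_n z^n = \sum_n \Lambda_0(n) z^n ,
\]
so $\Psi(z) - I(z)$ is exactly the generating function of $\Lambda_0$. Forming the Cauchy product and collecting terms according to $n = m + m'$ then gives
\[
\bigl(\Psi(z) - I(z)\bigr)^2 = \sum_m \sum_{m'} \Lambda_0(m)\Lambda_0(m') z^{m+m'} = \sum_n \psi^0_2(n) z^n ,
\]
the generating function of $\psi^0_2$, valid for $|z| < 1$.

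Next I would carry out the coefficient extraction. Substituting this power series together with the finite sum $K_N(z) = \sum_{k\le N} z^{-k}$ and interchanging summation with integration, the right-hand side of the claimed identity becomes
\[
\frac{1}{2\pi i} \sum_n \sum_{k\le N} \psi^0_2(n) \int_{|z|=r} z^{\,n-k}\, \frac{dz}{z} .
\]
The inner integral is controlled by the orthogonality relation $\frac{1}{2\pi i}\int_{|z|=r} z^{\,n-k-1}\, dz = 1$ when $n = k$ and $0$ otherwise, valid for every $r > 0$. This annihilates all terms except those with $n = k$; since $k$ runs over $1 \le k \le N$ and $\psi^0_2(n)$ vanishes for $n < 2$, the surviving terms are precisely $\sum_{n\le N} \psi^0_2(n)$, which is the asserted formula.

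The one step requiring genuine care --- and the main obstacle --- is justifying the interchange of the infinite sum with the integral. I would fix the radius to satisfy $0 < r < 1$ (needed for $\Psi$, $I$, and hence $(\Psi - I)^2$ to converge) and use the crude polynomial bound $\psi^0_2(n) \ll n \log^2 n$, which follows from $|\Lambda_0(m)| \le \log m + 1$. This makes $\sum_n \psi^0_2(n) z^n$ converge absolutely and uniformly on the circle $|z| = r$; since $K_N(z)/z$ is a fixed rational function bounded on that circle (the contour avoids the only singularity, at $z = 0$), the product converges uniformly there and term-by-term integration is legitimate. As a consistency check I would observe that $\bigl(\Psi(z) - I(z)\bigr)^2 K_N(z)/z$ is meromorphic in the punctured disk $0 < |z| < 1$ with its only pole at the origin, so the integral is independent of $r \in (0,1)$ and in fact equals the residue at $z = 0$, which recovers the same partial sum $\sum_{n\le N}\psi^0_2(n)$.
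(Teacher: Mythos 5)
Your proof is correct and takes essentially the same approach as the paper: both extract the partial sum $\sum_{n\le N}\psi^0_2(n)$ from a power series by integrating against the kernel $K_N(z)$ and invoking the orthogonality relation $\frac{1}{2\pi i}\int_{|z|=r} z^{n-k-1}\,dz=\delta_{n,k}$. The only (harmless) difference is that you square the generating function $\Psi(z)-I(z)=\sum_n\Lambda_0(n)z^n$ directly, whereas the paper expands via $\psi^0_2(n)=\psi_2(n)-2\psi(n-1)+(n-1)$ and extracts coefficients of $\Psi(z)^2$, $I(z)\Psi(z)$, and $I(z)^2$ separately before recombining into $(\Psi(z)-I(z))^2$; your explicit justification of the sum--integral interchange is a point the paper leaves implicit.
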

We remark that this has been shown in \cite[Equation (13) of Theorem 1]{GoldSur-Fujii}. A similar approach has also been used in \cite[Section 3]{Gold-Yang}.

\begin{proof}[Proof of Lemma \ref{lem4-psi2-0}]
Recall the equality \eqref{psi0_2-eq} that
\begin{equation}\label{lem4-start}
\psi^0_2(n)= \psi_2(n) - 2\psi(n-1) + (n-1).
\end{equation}
Using the functions $\Psi(z)$ and $I(z)$, we see that
\begin{align*}
&I(z)^2=\sum_n (n-1)z^n, \\
&I(z)\Psi(z)=\sum_n\left(\sum_{m\le n-1}\Lambda(m)\right)z^n=\sum_n\psi(n-1)z^n,
\end{align*}
and, recalling \eqref{smoothedPsisquare},
$$ \Psi(z)^2 = \sum_n \psi_2(n)z^n. $$
We now truncate and remove the weight $z^n$ in these sums. 
By Cauchy's residue theorem, we have
\begin{equation}\label{int-unit_circle}
\frac1{2\pi i} \int_{|z|=r}z^\ell \,dz =
\begin{cases}
1, & \mbox{if } \ell=-1, \\
0, & \mbox{otherwise}.
\end{cases}
\end{equation}
Applying \eqref{int-unit_circle} and using the kernel $K_N(z)$, we have
\begin{align*}
\frac1{2\pi i}\int_{|z|=r}\Psi(z)^2K_N(z)\,\frac{dz}z
=\sum_n\psi_2(n)\sum_{k=1}^N\left(\frac1{2\pi i}\int_{|z|=r}z^{n-k-1}\,dz\right)
=\sum_{n\le N}\psi_2(n),
\end{align*}
and similarly,
\begin{align*}
&\frac1{2\pi i}\int_{|z|=r}I(z)\Psi(z)K_N(z)\,\frac{dz}z =\sum_n\psi(n-1)\sum_{k=1}^N\left(\frac1{2\pi i}\int_{|z|=r}z^{n-k-1}\,dz\right)
=\sum_{n\le N}\psi(n-1), \\
&\frac1{2\pi i}\int_{|z|=r}I(z)^2K_N(z)\,\frac{dz}z =\sum_n(n-1)\sum_{k=1}^N\left(\frac1{2\pi i}\int_{|z|=r}z^{n-k-1}\,dz\right)
=\sum_{n\le N}(n-1).
\end{align*}
Therefore by \eqref{lem4-start},
\begin{align*}
\sum_{n\le N} \psi^0_2(n)
&= \sum_{n\le N} \left(\psi_2(n) - 2\psi(n-1) + (n-1)\right) \\
&= \frac1{2\pi i} \int_{|z|=r} \left(\Psi(z)^2-2\Psi(z)I(z)+I(z)^2\right) K_N(z) \,\frac{dz}z \\
&= \frac1{2\pi i} \int_{|z|=r} \left(\Psi(z)-I(z)\right)^2 K_N(z) \,\frac{dz}z.
\end{align*}
\end{proof}

We are now ready to prove Proposition \ref{thm2-BRgen}.

\begin{proof}[Proof of Proposition \ref{thm2-BRgen}]
Recall the formula \eqref{lem1'}. By Assumption \ref{Gassumption},
\begin{align*}
R_1(N) &\ll \left|G(N) - \frac{N^2}2\right| + \left|\sum_{n\le N} \psi^0_2(n)\right| + |O(N)| \\
&\ll N^2\exp(-f(N)) + \left|\sum_{n\le N} \psi^0_2(n)\right| + |O(N)|.
\end{align*}
Applying Lemma \ref{lem4-psi2-0} to the second term, we have
\begin{equation*}
\begin{aligned}
\sum_{n\le N} \psi^0_2(n)
&= \frac1{2\pi i} \int_{|z|=r} \left(\Psi(z) - I(z)\right)^2 K_N(z)\, \dfrac{dz}z \\
&= \int_0^1 \left( \Psi(re^{2\pi i\alpha})-I(re^{2\pi i\alpha}) \right)^2 K_N(re^{2\pi i\alpha}) \,d\alpha 
\end{aligned}
\end{equation*}
Thus
\begin{equation}\label{R_1bound-with-arc}
R_1(N) \ll N^2\exp(-f(N)) + \int_0^{1/2} \left|\Psi(re^{2\pi i\alpha})-I(re^{2\pi i\alpha})\right|^2 \left|K_N(re^{2\pi i\alpha})\right| \,d\alpha,
\end{equation}
where we used that the integrand is even with period one. It remains to bound this integral.

We first bound $K_N(z)$ where $z=re^{2\pi i\alpha}$ with $r=e^{-1/N}$.
We easily see that
\[ |K_N(z)|\le \frac{1}{r^N}\cdot \frac{1+r^N}{|1-z|}
\le \frac{1}{r^N}\cdot \frac{2}{|1-z|}
\ll \frac{1}{|1-z|}. \]
On the other hand, we can also bound $K_N(z)$ as
\[ |K_N(z)|
\le \frac{1}{|z|^N} \sum_{j=0}^{N-1}|z|^j \le \frac{N}{\left(e^{-1/N}\right)^N} \ll N. \]
Hence
\begin{equation}\label{thm4bound1}
K_N(z) \ll \min\{ N,\frac{1}{|1-z|}\}.
\end{equation}

Next we estimate $|1-z|$. We note that
\begin{align*}
|1-z|^2 &= |1-re^{2\pi i\alpha}|^2 
= \left(1-r\cos(2\pi\alpha)\right)^2 + r^2\sin^2(2\pi\alpha)
= (1-r)^2+4r\sin^2(\pi\alpha).
\end{align*}
The first term above is
$$ (1-r)^2 = \left(1-e^{-1/N}\right)^2 = \left(\frac{1}{N} + O\left(\frac{1}{N^2}\right)\right)^2 \asymp \frac{1}{N^2}. $$
Meanwhile, for the second term, we make use of the inequality
$$ \frac2\pi x \le \sin{x} \le x \qquad\text{for}\quad 0\le x\le\pi/2 $$
which implies
$$ 4r\sin^2(\pi\alpha) \asymp \alpha^2 \qquad\text{for all}\quad 0\le\alpha\le1/2. $$
Therefore
\begin{equation}\label{thm4bound2}
|1-z|^2\asymp \max\{\frac{1}{N^2},\alpha^2\} \qquad\text{for}\quad 0\le\alpha\le1/2,
\end{equation}
which also implies for $0\le\alpha\le1/2$,
\begin{equation}\label{thm4bound3}
|1-z|\asymp \max\{\frac{1}{N},|\alpha|\}
\qquad\text{and}\qquad
\frac1{|1-z|}\asymp \min\{N,\frac{1}{|\alpha|}\}.
\end{equation}

Finally we bound the integral in \eqref{R_1bound-with-arc}.
Let $1/N\le\delta\le1/4$.
We write
\begin{align*}
&\int_0^{1/2} \left|\Psi(re^{2\pi i\alpha})-I(re^{2\pi i\alpha})\right|^2 \left|K_N(re^{2\pi i\alpha})\right| \,d\alpha \\
&\qquad= \left(\int_0^\delta + \int_{\delta}^{1/2}\right) \left|\Psi(re^{2\pi i\alpha})-I(re^{2\pi i\alpha})\right|^2 \left|K_N(re^{2\pi i\alpha})\right| \,d\alpha.
\end{align*}
On the arc $\alpha\in[0,\delta]$, we note that
\[ I(z) = \frac1{1-z} + O(1), \]
thus we can write
\begin{align*}
&\int_0^{\delta} \left( \Psi(re^{2\pi i\alpha})^2-I(re^{2\pi i\alpha}) \right)^2 \left|K_N(re^{2\pi i\alpha})\right| \,d\alpha \\
&\qquad= \int_0^{\delta} \left( \Psi(re^{2\pi i\alpha})-\frac1{1-re^{2\pi i\alpha}} + O(1) \right)^2 K_N(re^{2\pi i\alpha})\,d\alpha.
\end{align*}
Applying Lemma \ref{lem3-thm2}, we have that the above is
\begin{align*}
\ll \int_0^{\delta} \left(|1-re^{2\pi i\alpha}|^2\,N^3\exp\left(-f\left(N^{1/A}\right)\right) + |O(1)| \right)^2 \left|K_N(re^{2\pi i\alpha})\right| \,d\alpha.
\end{align*}
By \eqref{thm4bound1}, \eqref{thm4bound2} and \eqref{thm4bound3}, this is bounded by
\begin{align*}
&\ll \int_0^{\frac1N} \left(\frac1{N^2}\,N^3\exp\left(-f\left(N^{1/A}\right)\right)\right)^2 N\,d\alpha + \int_\frac1N^{\delta} \left(\alpha^2N^3\exp\left(-f\left(N^{1/A}\right)\right)\right)^2\,\frac{d\alpha}{\alpha} \\
&= N^2\exp\left(-2f\left(N^{1/A}\right)\right) + N^6\exp\left(-2f\left(N^{1/A}\right)\right)\int_\frac1N^{\delta}\alpha^3\,d\alpha \\
&\ll \delta^4N^6\exp\left(-2f\left(N^{1/A}\right)\right).
\end{align*}

On the arc $[\delta,1/2]$, we estimate the integral as
\begin{align*}
\int_\delta^{1/2} \left|\Psi(re^{2\pi i\alpha})-I(re^{2\pi i\alpha})\right|^2 \left|K_N(re^{2\pi i\alpha})\right| \,d\alpha
&\ll \int_\delta^{1/2} \left|\Psi(re^{2\pi i\alpha})-I(re^{2\pi i\alpha})\right|^2 \frac{1}{|\alpha|}d\alpha \\
&\le \frac{1}{\delta} \int_\delta^{1/2} \left| \sum_n (\Lambda(n)-1)r^n e^{2\pi in\alpha} \right|^2 \,d\alpha \\
&\le \frac{1}{\delta} \int_0^{1} \left| \sum_n (\Lambda(n)-1)r^n e^{2\pi i n \alpha}\right|^2 \,d\alpha.
\end{align*}
Using Parseval's identity, we have that the integral on the right-hand side of the last inequality equals $\sum_n (\Lambda(n)-1)^2 r^{2n}$. Hence the integral over the arc $[\delta,1/2]$ is
\begin{align*}
\ll \frac{1}{\delta}\sum_n \Lambda(n)^2 r^{2n} = \frac{1}{\delta} \sum_n \Lambda(n)^2 e^{-2n/N}.
\end{align*}
Next we truncate the sum at $N$ and write
\begin{align*}
\sum_n \Lambda(n)^2 e^{-2n/N} = \sum_{n\le N} \Lambda(n)^2 e^{-2n/N} + \sum_{n> N} \Lambda(n)^2 e^{-2n/N}
=: S_1+S_2.
\end{align*}
For the sum $S_1$, PNT immediately gives
\begin{align*}
S_1 = \sum_{n\le N} \Lambda(n)^2 e^{-2n/N} \le \sum_{n\le N} \Lambda(n)^2 \le \Lambda(N) \sum_{n\le N} \Lambda(n)
= \psi(N)\Lambda(N) \le \psi(N)\log{N}
\ll N \log{N}.
\end{align*}
Thanks to the exponential term, $S_2$ is also bounded by $N \log{N}$: Using, for example, partial summation, we see that 
\begin{align*}
S_2 = \sum_{n>N} \Lambda(n)^2 e^{-2n/N}
&\ll \sum_{n\le N} \Lambda(n)^2 + \frac1N \int_N^\infty \left(\sum_{n\le t} \Lambda(n)^2 \right) e^{-2t/N} \,dt \\
&\ll N\log N + \frac1N \int_N^\infty t(\log{t}) e^{-2t/N} \,dt \\
&\ll N\log N + N\log N \int_2^\infty t(\log{t}) e^{-t} \,dt \\
&\ll N\log N.
\end{align*}
Therefore $S_1+S_2 \ll N \log N$.

Combining the bounds on both arcs, we have
$$
\int_0^{1/2} \left|\Psi(re^{2\pi i\alpha})-I(re^{2\pi i\alpha})\right|^2 \left|K_N(re^{2\pi i\alpha})\right| \,d\alpha
\ll \delta^4N^6\exp\left(-2f\left(N^{1/A}\right)\right) + \frac{N \log N}{\delta}.
$$
Balancing both terms on the right-hand side, we get
$$
\delta = N^{-1} (\log N)^{1/5} \exp\left({\textstyle \frac25} f\left(N^{1/A}\right)\right),
$$
and therefore
$$
\int_0^{1/2} \left|\Psi(re^{2\pi i\alpha})-I(re^{2\pi i\alpha})\right|^2 \left|K_N(re^{2\pi i\alpha})\right| \,d\alpha
\ll N^2\exp\left(-{\textstyle \frac25}f\left(N^{1/A}\right)\right)(\log N)^{4/5}.
$$
Substituting this into \eqref{R_1bound-with-arc}, we obtain
\begin{align*}
R_1(N) 
&\ll N^2\exp(-f(N)) + N^2\exp\left(-{\textstyle \frac25}f\left(N^{1/A}\right)\right)(\log N)^{4/5} \\
&\ll N^2\exp\left(-{\textstyle \frac25}f\left(N^{1/A}\right)\right)(\log N)^{4/5}.
\end{align*}
By \eqref{RfromR1}, the above bound then implies
\begin{equation}\label{Rbound-exact}
R(x) \ll x (\log x)^{2/5} \exp\left(-\frac15 f\left(\left(\frac{x}2\right)^{1/A}\right)\right),
\end{equation}
which means that there exists a constant $0<C<1$ such that
\[ R(x) \ll x \exp\left(-C f\left(\left(\frac{x}2\right)^{1/A}\right)\right). \]
\end{proof}

The proof of Proposition \ref{thm2-BRgen}, together with Lemmas \ref{lem2-thm2} and \ref{lem3-thm2}, follows the proof of Bhowmik and Ruzsa \cite{BhowRuzsa2018} with a few modifications which somewhat simplify the proof. If we use our Assumption \ref{Gassumption} in their proof, we obtain the insignificantly weaker bound
\[ R(x) \ll x(\log{x})^{1/2}\exp\left(-\frac16f\left(x^{1/A}\right)\right). \]
Finally, using Proposition \ref{thm2-BRgen} and \ref{Ingham-b} of \hyperref[thm-InghamPintz]{PNT-ZFR} we easily complete the proof of Theorem \ref{thm2}.

\begin{proof}[Proof of Theorem \ref{thm2}]
Let $\eta(u)$ be a function satisfying all the conditions \hyperref[thm-InghamPintz]{PNT-ZFR} including the additional condition $\lim_{u\to\infty}\eta'(u)=0$ imposed in \ref{Ingham-b}.
For $\varpi(x)$ as defined in \eqref{converse_omega}, we assume that for some constant $C>5A$,
\[ G(N) = \frac{N^2}{2} + O\left(N^2\exp(-C\varpi(N))\right), \qquad \text{for} \quad N\ge4. \]
Taking $f(x) = C\varpi(x)$ in Proposition \ref{thm2-BRgen}, we have for any real $A>1$ and sufficiently large $x$,
\[ R(x) = O\left(x\exp\left(-C'\varpi\left(\left(\frac{x}2\right)^{1/A}\right)\right)\right) \]
for some $C'>A$.
Note that
\begin{align*}
\varpi \left(\left(\frac{x}2\right)^{1/A}\right)
&= \frac1A\min_{u\ge0}\left(\eta(u)\log\left(\frac{x}2\right)+Au\right) \\
&\ge \frac1A\min_{u\ge0}\left(\eta(u)\log\left(\frac{x}2\right)+u\right)
= \frac1A\varpi\left(\frac{x}2\right),
\end{align*}
thus
\[ R(x) = O\left(x\exp\left(-C_0\varpi\!\left(\frac{x}2\right)\right)\right) \]
for some $C_0>1$.
Following the argument in \cite[p. 64]{Ingham1932}, we can remove the $1/2$ factor:
Since $\log{x}-\varpi(x)$ is an increasing function of $x$,
\begin{align*}
\exp\left(-C_0\varpi\!\left(\frac{x}2\right)\right)
&= \exp\left(-C_0\log\frac{x}2 + C_0\log\frac{x}2 - C_0\varpi\!\left(\frac{x}2\right)\right) \\
&\le \exp\left(-C_0\log\frac{x}2 + C_0\log{x} - C_0\varpi(x)\right) \\
&= \exp\left(C_0\log2 - C_0\varpi(x)\right)
= 2^{C_0}\exp\left(-C_0\varpi(x)\right) \\
&\ll \exp(-C_0\varpi(x)).
\end{align*}
Therefore
\[ R(x) = O\left(x\exp(-C_0\varpi(x))\right), \quad\text{for some constant } C_0>1. \]
If $C_0>2$ we can degrade the bound to $R(x) = O\left(x\exp\left(-C_0'\varpi\!(x)\right)\right)$, where $C_0'$ is a constant satisfying $1<C_0'<2$.
By \ref{Ingham-b} of \hyperref[thm-InghamPintz]{PNT-ZFR}, for all sufficiently large $|t|$, $\zeta(\sigma+it)\neq0$ in the region $\sigma > 1 - \eta(\log|t|)$.
\end{proof}

\section{Smooth average --- Proof of Theorem \ref{thm-smooth} and Corollary \ref{cor-smooth1}}
\label{sec:smooth}

In this final section, we consider a smooth average of the number of Goldbach representations, analogous to the function $\Psi(z)^2$ we used in Section \ref{sec-converse}.
We consider the quantity
\[ F(N) = \sum_n \psi_2(n)e^{-n/N}, \]
as defined in Theorem \ref{thm-smooth}.
If we set
\[ \Psi(N) := \sum_n\Lambda(n)e^{-n/N}, \]
which is the $\Psi$-function defined in \eqref{smoothedPsi} with $z=e^{-1/N}$, we can then write
\begin{align*}
\Psi(N)^2 = \sum_m\sum_{m'} \Lambda(m)\Lambda(m')e^{-(m+m')/N}
= \sum_n\psi_2(n)e^{-n/N} = F(N).
\end{align*}
Using the explicit formula
\begin{equation}\label{PsiExplicit}
\Psi(N) = N -\sum_\rho \Gamma(\rho) N^\rho -\log 2\pi + O\left(\frac1N\right), \end{equation}
see for example \cite[12.1.1 Exercise 8 (c)]{MontgomeryVaughan2007},
where, again, the sum is over the zeros $\rho=\beta+i\gamma$ of the Riemann zeta-function satisfying $\gamma\neq0$, we have
\begin{equation}\label{PsiSmoothSumeq}
\begin{aligned}
F(N) = N^2-2\sum_\rho \Gamma(\rho) N^{\rho+1} + (\Psi(N)-N)^2 + O(N).
\end{aligned}
\end{equation}
This is our key formula which also makes the smooth average $F(N)$ easy to deal with.

\begin{proof}[Proof of Theorem \ref{thm-smooth}]
By Stirling's formula (see for example \cite[Theorem C.1 or the first equation on p. 524]{MontgomeryVaughan2007}), we have, for $0\le \sigma \le 1$ and $|t|\ge 1$, 
\begin{equation*}
\Gamma(\sigma + it) \ll e^{-|t|},
\end{equation*}
see \cite[Equation (C.19)]{MontgomeryVaughan2007}.
Thus
\begin{equation}\label{smooth-gamma_sum}
\left| \sum_{\rho} {\Gamma\left( \rho \right)N^{\rho}} \right|
\leq \sum_{\rho=\beta+i\gamma} \left| \Gamma\left( \rho \right) \right| \left|N^{\beta + i\gamma}\right| 
\ll \sum_{\rho=\beta+i\gamma} e^{-|\gamma|} N^{\beta}.
\end{equation}

We now assume that
$\zeta(\sigma+it)\neq0$ in some region $\sigma>1-\eta(|t|)$, where $0<\eta(u)\le1/2$.
Recall that non-real zeros of the Riemann zeta-function are symmetric with respect to the real axis, thus truncating the sum at some height $T>0$ we have
\begin{equation}\label{smooth-breaking_sum}
\sum_{\rho=\beta+i\gamma} e^{-|\gamma|} N^{\beta}
\ll \sum_{\gamma>0} e^{-\gamma} N^{1-\eta(\gamma)}
\ll \sum_{0<\gamma\le T} e^{-\gamma} N^{1-\eta(\gamma)}
+ \sum_{\gamma>T} e^{-\gamma} N^{1-\eta(\gamma)}.
\end{equation}
The number of zeros of the Riemann zeta-function on each vertical segment of length $1$ is
$$ \sum_{T<\gamma \le T+1} 1 \ll \log T, $$
see \cite[Theorem 25]{Ingham1932}, \cite[Theorem 9.4]{Titchmarsh}, or \cite[Corollary 14.3]{MontgomeryVaughan2007}.
Applying this bound, we can show that
$$ \sum_{0<\gamma\le T} e^{-\gamma} \ll 1. $$
Hence the first sum on the right-hand side of \eqref{smooth-breaking_sum} is $\ll N^{1-\eta(T)}$.
For the second sum, we bound $N^{1-\eta(\gamma)}$ trivially by $N$ and thus
\begin{align*}
\sum_{\gamma>T} e^{-\gamma} N^{1-\eta(\gamma)}
\ll N \sum_{\gamma>T} e^{-\gamma}
\ll N e^{-T}\log T.
\end{align*}
Substituting the above into \eqref{smooth-gamma_sum} we have
\begin{equation}\label{gamma-sum}
\sum_\rho \Gamma(\rho) N^{\rho} \ll N^{1-\eta(T)} + N e^{-T}\log T.
\end{equation}

As discussed in Section \ref{sec:Goldbach-error}, a zero-free region $1-\eta(u)$ of the Riemann zeta-function satisfies either
$$
\log^{a_1}T \ll \frac1{\eta(T)} \ll \log^{a_2}T \qquad\text{for some constants } 0<a_1<a_2\le1,
$$
which implies
$$ \log\frac1{\eta(T)} \asymp \log\log T, $$
or, the quasi-RH or RH case where
$$ \log\frac1{\eta(T)} \ll 1. $$
Thus balancing the two terms on the right-hand side of \eqref{gamma-sum} leads us to an optimal choice $T=\log N$ and we have
\begin{align*}
\sum_\rho \Gamma(\rho) N^{\rho} \ll N^{1-\eta(\log N)} + \log\log N
\ll N^{1-\eta(\log N)}.
\end{align*}
Therefore by \eqref{PsiExplicit} and \eqref{PsiSmoothSumeq}, we have
\begin{align*}
F(N) &= N^2-2\sum_\rho \Gamma(\rho) N^{\rho+1} + \left(\sum_\rho \Gamma(\rho) N^{\rho}+O(1)\right)^2 + O(N) \\
&= N^2 + O\left(N^{2-\eta(\log N)}\right) + O\left(N^{2(1-\eta(\log N))}\right) \\
&= N^2 + O\left(N^{2 - \eta(\log N)}\right).
\end{align*}
\end{proof}

\begin{proof}[Proof of Corollary \ref{cor-smooth1}]
Assume $\zeta(\sigma+it)\neq0$ in the region
$$ \sigma > 1 - \eta(|t|), \qquad |t|\ge3, $$
where
\begin{equation*}
\eta(u) = \frac{c}{(\log{u})^a(\log\log{u})^b}
\end{equation*}
for some $c>0$, $0\le a\le1$, and $b\in\mathbb{R}$ but $b\ge0$ if $a=0$.
Then we have for $N\ge16$,
\[ N^{1-\eta(\log{N})}
=Ne^{-\frac{c\log{N}}{(\log\log{N})^a(\log\log\log{N})^b}}, \]
and by Theorem \ref{thm-smooth},
\[ F(N) = N^2 + O\left(N^{2}\exp\left(-\frac{c\log N}{{\left( \log{\log N} \right)^{a}}{\left( \log{\log{\log N}} \right)^{b}}}\right)\right). \]
\end{proof}

\section*{Acknowledgement and Funding}

This work was supported by MEXT Initiative for Realizing Diversity in the Research Environment through Kyushu University's Diversity and Super Global Training Program for Female and Young Faculty (SENTAN-Q).
The fourth author was also supported by JSPS KAKENHI Grant Numbers 18K13400 and 22K13895.
We thank Professor Daniel A. Goldston for giving us lots of valuable remarks and suggestions on this project.
We also thank the American Institute of Mathematics where some parts of this project were done.

\end{document}